\setlist{leftmargin=*, topsep=0.5em, parsep=0pt, itemsep=1em, labelindent=0pt, align=left}
\definecolor{red}{HTML}{D62728}
\definecolor{blue}{RGB}{ 0, 109, 219}
\definecolor{dgreen}{rgb}{0,.8,0}
\crefname{enumi}{}{}
\Crefname{enumi}{}{}
\newcommand{\ie}{\emph{i.e.}\@ifnextchar.{\!\@gobble}{}}
\newcommand{\eg}{\emph{e.g.}\@ifnextchar.{\!\@gobble}{}}
\newcommand{\etc}{etc\@ifnextchar.{}{.\@}}
\newtheorem*{theorem*}{Theorem}
\newtheorem{thm}{Theorem}[section]
\newtheorem{lemma}[thm]{Lemma}
\newtheorem{prop}[thm]{Proposition}
\newtheorem{corollary}[thm]{Corollary}
\newtheorem{remark}[thm]{Remark}
\crefname{lemma}{Lemma}{Lemmas}
\numberwithin{equation}{section}
\def\P{{\mathbb P}} 
\def\Q{{\mathbb Q}} 
\def\E{{\mathbb E}} 
\def\Var{\mathrm{Var}}
\def \N{{\mathbb{N}}}
\def \R {{\mathbb{R}}}
\newcommand{\ep}{\epsilon}
\newcommand{\sgn}{\text{sgn}}
\begin{document}
\title[]{On the exponential integrability of the derivative of intersection and self-intersection local time for Brownian motion and related processes
}
\author{Kaustav Das$^{\dagger \ddagger}$}
\author{Gregory Markowsky$^{\dagger}$}
\author{Binghao Wu$^{\dagger}$}
\address{$^\dagger$School of Mathematics, Monash University, Victoria, 3800 Australia.}
\address{$^\ddagger$Centre for Quantitative Finance and Investment Strategies, Monash University, Victoria, 3800 Australia.}
\email{kaustav.das@monash.edu, greg.markowsky@monash.edu, binghao.wu@monash.edu}
\date{}
\maketitle

\begin{abstract}
We show that the derivative of the intersection and self-intersection local times of alpha-stable processes are exponentially integrable for certain parameter values. This includes the Brownian motion case. We also discuss related results present in the literature for fractional Brownian motion, and in particular give a counter-example to a result in [Guo, J., Hu, Y., and Xiao, Y., Higher-order derivative of intersection local time for two independent fractional Brownian motions, Journal of Theoretical Probability 32, (2019), pp. 1190-1201] related to this question.
\end{abstract}


\section{Introduction}
\label{sec:introduction}
\noindent
Let $B^1$ and $B^2$ be independent real valued Brownian motions. The intersection local-time (ILT) of $B^1$ and $B^2$ is formally defined as
\begin{align}
\label{eqn:ILT}
	\int_0^T \int_0^T \delta(B^1_t  - B^2_s) ds dt 
\end{align}
where $\delta$ is the Dirac delta function. Intuitively, ILT measures the amount of time the processes $B^1$ and $B^2$ spend intersecting each other on the time interval $[0, T]$. Similarly, let $B$ be a real valued Brownian motion. The self-intersection local-time (SLT) of $B$ is formally defined as 
\begin{align}
\label{eqn:SLT}
	\int_0^T \int_0^t \delta(B_t  - B_s) ds dt.
\end{align}
Intuitively, SLT measures the amount of time the process $B$ spends revisiting prior attained values on the time interval $[0, T]$. 

Consider the following functional introduced in \citep{rogers1991intrinsic, rogers1991local, rogers1991t},
\begin{align*}
    A(T,B_{T})=\int_{0}^{T}1_{[0,\infty)}(B_T-B_s)ds.
\end{align*}
A formal application of It\^o's formula yields the formula:
\begin{align*}
    \frac{1}{2}\int_{0}^T\int_{0}^t\delta'(B_{t}-B_{s})dsdt + \frac{1}{2} \sgn(x)t =
    \int_0^t L_s^{B_s - x} dB_s - \int_{0}^{t} \sgn(B_{t}-B_{u} - x) du. 
\end{align*} 
A slightly different formula was stated as a formal identity without proof in \citep{rosen2005derivatives}, and this formula was rigorously proved in \citep{markowsky2008proof}. We note in particular the random variable
\begin{align} 
	\int_{0}^T\int_{0}^t\delta'(B_{t}-B_{s})dsdt, \label{eqn:DSLTBM}
\end{align}
which is referred to as the derivative of self-intersection local-time (DSLT) of $B$. This is the principle object of study in this paper.

As \cref{eqn:ILT}, \cref{eqn:SLT}, and \cref{eqn:DSLTBM} are formal expressions, a first step in giving precise meaning to them is by approximating $\delta$ with the Gaussian heat kernel
\begin{align}
\label{eqn:gaussian}
	\rho_{\epsilon}(x) :=\frac{1}{\sqrt{2\pi \epsilon}}e^{-\frac{x^{2}}{2\epsilon}} 
\end{align}
which we note converges weakly to $\delta$ as $\ep \downarrow 0$. It is important to note that in the study of ILT and SLT, it is more convenient to utilise the representation of $\rho_{\epsilon}$ through the Fourier transform
\begin{align}
\label{eqn:gaussianfourier}
	\rho_{\epsilon}(x) = \frac{1}{2\pi} \int_{\R}e^{ipx}e^{-\frac{p^{2}\epsilon}{2}}dp. 
\end{align}
Letting $\gamma_\epsilon$ denote either \cref{eqn:ILT} or \cref{eqn:SLT} with $\delta$ replaced with $\rho_\epsilon$, then providing precise meaning to ILT or SLT amounts to showing that $\gamma_\epsilon$ converges as $\epsilon \downarrow 0$ in some manner. Indeed, this has been the topic of various articles, see for example \citep{bass2004self, berman1969local, berman1973local, chen2004large, le1994exponential}. We also mention the definitive resource on local times of Markov processes, \citep{marcus2006markov}. Additionally, it would be remiss to not mention the comprehensive work of \citep{geman1980occupation} for an extensive review on the general theory of local time (LT), which in particular introduces the notion of considering LT, ILT and SLT as occupation densities. However, for this article we will not require this interpretation. 

Recently there has been a surge of interest in DSLT, as well as the derivatives of ILT, which we denote by DILT; see for instance
\citep{yan2017derivative, yan2022derivative, guo2019higher}.  In a similar manner to before, in order to make sense of \cref{eqn:DSLTBM}, one approximates $\delta$ by $p_\ep$, then differentiates its Fourier representation \cref{eqn:gaussianfourier} to obtain the expression
\begin{align*}
	\frac{i}{2 \pi} \int_{0}^T \int_{0}^{t} \int_{\mathbb{R}} p e^{ip (B_t - B_s)}  e^{-\frac{p^2}{2 \epsilon}} dpdsdt,
\end{align*}
which is then shown to converge as $\epsilon\downarrow 0$ a.s. and in $L^p$ for all $p > 0$. DSLT is then defined to be this limit. Note that derivatives of order higher than one do not exist for Brownian motion (although see \citep{newguy}) but do for fractional Brownian motion with certain values of the Hurst parameter $H$; see \citep{geman1984local, yu2021higher, yu2023smoothness, das2022existence, kuang2022derivative}. Recently, DSLT has been considered for higher-order intersections, in \citep{guo2023derivative}, following an initial work in \citep{rosen2010continuous}.

We will say that a random variable $X$ is {\it exponentially integrable} of order $\beta$ if there exists a constant $M > 0$ such that $\E[\exp \{ M|X|^{\beta} \}]<\infty$. Exponential integrability in general is vital for many subfields of probability theory, since with $\beta=1$ it is equivalent to the existence of the moment generating function $M_X(t) := \E[e^{tX}]$ for $t$ in a neighborhood of $0$, and can also be used to give strong tail estimates on the distribution of $X$. The relationship of this concept to ILT, SLT, and their variants is due primarily to the applications of these processes in the physical sciences to study various phenonema. For example, if we let $\gamma$ denote the SLT of Brownian motion and noting that SLT provides a measure of self intersection, one can define a probability measure
\begin{align*}
	\Q(d \omega) = C\exp\{M \gamma^\beta \} \P(d \omega)
\end{align*}
where $\P$ denotes the standard Wiener measure and $M$ and $\beta$ are constants, whereas $C$ is a normalising constant. The probability measure $\Q$ then provides a model of self attracting or self avoiding Brownian motion depending on the sign of $M$ ($> 0$ and $< 0$ respectively). Of course, whether or not $\Q$ is well-defined hinges on whether $C\exp\{M \gamma^\beta\}$ is indeed a Radon-Nikodym derivative, and exponential integrability provides an affirmative answer to this question. Such motivation (and other motivations) is discussed in \citep{bass2004self,konig2006brownian,le1994exponential}, among other places.  

In this article, we will consider the exponential integrability of DILT and DSLT of symmetric stable processes with index of stability $\alpha \in (0, 2]$, also known simply as symmetric $\alpha$-stable processes. For the purposes of this article, it is enough to consider stochastic processes taking values in $\R$. In the following it is understood that $\alpha \in (0, 2]$. We recall that an $\alpha$-stable process is a real valued L\'evy process $X = (X_t)_{t \geq 0}$ with the property that $X_1$ is a strictly stable random variable with index of stability $\alpha$, meaning that for any $A, B > 0$ and independent copies $X_1^{(1)}, X_1^{(2)}$ of $X_1$, we have $A X^{(1)}_1 + B X^{(2)}_1 \stackrel{d}{=} (A^{\alpha} + B^{\alpha})^{1/\alpha} X_1$. Moreover, it is well known that a L\'evy process is an $\alpha$-stable process if and only if it possesses the self similarity property $X_{at} \stackrel{d}{=} a^{1/\alpha} X_t$ for any $a > 0$, where $\alpha$ denotes the index of stability of $X_1$. A symmetric $\alpha$-stable process is an $\alpha$-stable process where $X_1$ is a strictly stable symmetric random variable. In this case, the characteristic function of $X_t$ admits the convenient form 
\begin{align*}
    \E[e^{iu X_t}] = e^{-t \sigma^{\alpha}|u|^{\alpha}}
\end{align*}
for some $\sigma > 0$. For further insights we refer to \citep{lee1992dependence, applebaum2009levy, samoradnitsky2017stable}.

Exponential integrability of DILT has been studied in the literature before, most notably in \citep{guo2019higher, zhou2023derivatives}.
In this article we are primarily interested in DSLT, rather than DILT, due to the applications given above, as well as its more intricate structure, not to mention the fact that this question seems not to have been addressed in the literature. However, the first step is to study the question for DILT, which we do in \Cref{sec:mainresultsILT}. The result given there is then used to deduce the desired result for DSLT using a method pioneered by Le Gall, which we do in \Cref{sec:mainresultsSLT} (and we discuss Le Gall's method in detail in \Cref{appen:LeGall}). In \Cref{sec:DSLTfBM} we discuss the case for fractional Brownian motion. We also include two appendices at the end containing required technical facts. 

\begin{remark}
    In the rest of the article, we will be content with considering the objects we study over the time interval $[0, 1]$ without loss of generality, as the case of $[0, T]$ can be obtained trivially via scaling.
\end{remark}


\section{Exponential integrability for derivative of intersection local-time}
\label{sec:mainresultsILT}
\noindent
In this section we state and prove results regarding the exponential integrability of DILT for symmetric $\alpha$-stable processes and Brownian motion.

Let $X^1, X^2$ be independent symmetric stable processes with the same index of stability $\alpha$. Consider the DILT of $X^1$ and $X^2$, which can be expressed as
\begin{align*}
    \theta:=\frac{i}{2\pi}\int_{0}^{1}\int_{0}^{1}\int_{\R}pe^{ip(X_{t}^{1}-X_{s}^{2})}dpdsdt.
\end{align*}
The existence of $\theta$ is proved in Rosen \citep{rosen2005derivatives}. 
We prove the following regarding the exponential integrability of $\theta$.
\begin{thm}
\label{DILT exponential integrability}
Suppose the common index of stability $\alpha \in (\frac{3}{2}, 2]$ and let $\beta \in [0, \frac{\alpha}{3})$. Then there exists a constant $M > 0$ such that $\E[\exp \{ M|\theta|^{\beta} \}]<\infty$.
\end{thm}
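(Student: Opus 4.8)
The plan is to establish exponential integrability of $\theta$ by controlling moments $\E[|\theta|^n]$ and showing they grow no faster than $(n!)^{1/\beta}$ for $\beta < \alpha/3$, which is the standard route: if $\E[|\theta|^n] \le C^n (n!)^{1/\beta}$ then $\E[\exp\{M|\theta|^\beta\}] = \sum_n \frac{M^n}{n!}\E[|\theta|^{n\beta}] < \infty$ for $M$ small (after interpolating $\E[|\theta|^{n\beta}] \le \E[|\theta|^n]^\beta$ or working along integer moments directly). So the heart of the matter is a good upper bound on $\E[|\theta|^n]$.

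**First I would** write, using the Fourier representation and Fubini,
\begin{align*}
\E[|\theta|^n] \le \frac{1}{(2\pi)^n}\int_{[0,1]^{2n}} \int_{\R^n} \Big(\prod_{j=1}^n |p_j|\Big) \,\Big|\E\big[e^{i\sum_j p_j(X^1_{t_j} - X^2_{s_j})}\big]\Big|\, dp\, ds\, dt.
\end{align*}
By independence of $X^1$ and $X^2$ the expectation factors, and on each side one restricts to the simplex $t_{\pi(1)} < \cdots < t_{\pi(n)}$ (summing over the $n!$ orderings, which is where one factor of $n!$ — to some power — will be absorbed) and uses the independent-increments/stationarity structure: after the change of variables to increments, the characteristic function becomes $\prod_k \exp\{-\sigma^\alpha (t_{(k)} - t_{(k-1)}) |p_{(k)} + \cdots + p_{(n)}|^\alpha\}$. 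Then I would substitute $u_k = p_{(k)} + \cdots + p_{(n)}$ (a unimodular linear change with $p_{(k)} = u_k - u_{k+1}$), so that the Gaussian-type factor diagonalizes as $\prod_k \exp\{-\sigma^\alpha \Delta t_k |u_k|^\alpha\}$ while the polynomial weight $\prod_j |p_j|$ becomes $\prod_k |u_k - u_{k+1}| \le \prod_k (|u_k| + |u_{k+1}|)$, which expands into a sum of at most $2^n$ monomials in $|u_1|,\dots,|u_n|$ each of total degree $n$ with each variable appearing to a bounded power. Integrating $\int_\R |u_k|^{m_k} e^{-\sigma^\alpha \Delta t_k |u_k|^\alpha}\,du_k \asymp (\Delta t_k)^{-(m_k+1)/\alpha}\,\Gamma\big(\tfrac{m_k+1}{\alpha}\big)$ and then integrating over the simplex in the $\Delta t_k$'s produces a product of Gamma functions; the condition $\beta < \alpha/3$ is exactly what is needed so that $\sum_k (m_k+1)/\alpha < 1$-type integrability holds when $m_k$ is as large as $2$ (the "worst" variable carrying degree two from the two neighboring factors, plus the $+1$), keeping each time-integral finite, and so that the resulting Gamma-function product is bounded by $C^n\, (n!)^{1/\beta}$ after using $\Gamma(x+y)\le \Gamma(x)\Gamma(y)\cdot$(const$)^{x+y}$-style estimates and the asymptotics of $\prod \Gamma(\cdot)$ along the simplex.

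**The main obstacle** I anticipate is the bookkeeping in this moment computation: correctly tracking how the $n!$ from the orderings of the simplex interacts with the Gamma functions coming from the time integrals and the spatial $|u_k|$-integrals, and verifying that the exponents land precisely so that the bound is $C^n(n!)^{1/\beta}$ for every $\beta$ strictly below $\alpha/3$ (and not just $\beta < \alpha/3 - \epsilon$). In particular one must be careful that the polynomial weight, after the $u$-substitution and expansion, never forces a single $\Delta t_k$-integral to diverge — this is where $\alpha > 3/2$ enters, since we need $(2+1)/\alpha < 2$, i.e. $\alpha > 3/2$, for the worst local exponent to remain integrable against the simplex measure. A secondary technical point is justifying all the Fubini interchanges and the passage from the $\epsilon$-regularized object to $\theta$ at the level of moments, but since the existence of $\theta$ and its $L^p$ convergence are already given (Rosen), I would simply bound $\E[|\theta|^n] = \lim_\epsilon \E[|\theta_\epsilon|^n]$ and do all estimates on $\theta_\epsilon$ with the extra factor $e^{-\epsilon|p_j|^2/2} \le 1$ harmlessly present.
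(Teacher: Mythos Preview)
Your overall strategy is exactly the paper's: bound $\E[|\theta|^n]$ by the Fourier formula, factor by independence, restrict to simplices in $t$ and $s$, rewrite in terms of partial sums, and then use Gamma/Beta estimates to land at $\E[|\theta|^n]\le C^n(n!)^{3/\alpha}$, from which exponential integrability for $\beta<\alpha/3$ follows by the series argument you sketch.

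There is, however, a genuine gap in the decisive step. After factoring, the $t$-factor is $\prod_k e^{-\sigma^\alpha \Delta t_k |u_k|^\alpha}$ in the variables $u_k=p_{(k)}+\cdots+p_{(n)}$, but the $s$-factor is $\prod_k e^{-\sigma^\alpha \Delta s_k |w_k|^\alpha}$ where the $w_k$ are partial sums taken with respect to a \emph{different} permutation; in general there is no single linear substitution that diagonalizes both. Your write-up only integrates the $|u_k|^{m_k}$ weight against the $t$-factor, which gives $(\Delta t_k)^{-(m_k+1)/\alpha}$; but then the simplex integral in $\Delta t_k$ requires $(m_k+1)/\alpha<1$, not $<2$, and this fails precisely for the ``worst'' $m_k=2$. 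Your stated condition $(2+1)/\alpha<2$ shows you expect to use decay from the $s$-side as well, but you do not say how to access it once the $u$-substitution has destroyed its product structure.

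The paper resolves this with a Cauchy--Schwarz step in the $p$-integral. Writing $v_j$ and $v'_j$ for the partial sums adapted to the $t$- and $s$-orderings respectively, one has both $\prod_j|p_j|\le\prod_j(v_j^2+|v_j|+1)$ and $\prod_j|p_j|\le\prod_j(v_j'^{\,2}+|v_j'|+1)$. Splitting $\prod|p_j|=(\prod|p_j|)^{1/2}(\prod|p_j|)^{1/2}$ and applying Cauchy--Schwarz decouples the two factors into products of one-dimensional integrals of the form $\int_\R \frac{v^2+|v|+1}{(1+u|v|^\alpha)^2}\,dv$, finite exactly when $\alpha>3/2$ and scaling as $u^{-3/\alpha}$. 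After the square roots this yields $\prod_j u_j^{-3/(2\alpha)}\,u_j'^{\,-3/(2\alpha)}$, and now each simplex integral converges and contributes a $\Gamma\big(n(1-\tfrac{3}{2\alpha})+1\big)^{-1}$, giving the bound $C^n(n!)^2/\Gamma\big(n(1-\tfrac{3}{2\alpha})+1\big)^2\le C^n(n!)^{3/\alpha}$. Once you insert this decoupling, the remainder of your plan goes through as written.
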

\begin{remark}
 There is overlap between this result and Theorem 1.1 in the recent paper \citep{zhou2023derivatives}. However, we are interested in a simpler situation than is considered there, since our primary interest is DSLT (which is not considered in that paper). We therefore include a proof of this result which is significantly simpler than that given in \citep{zhou2023derivatives} for the benefit of the reader.
\end{remark}

\begin{proof}
Since $|\theta|>0$, it suffices to focus on the $n$-th moment of $|\theta|$ and then use the Maclaurin series for the exponential function. We first proceed by considering the case of even $n$. We have
\begin{align*}
	\E[|\theta|^{n}]&=\E[\theta^{n}] \\
        &=\E \left [\frac{i^{n}}{(2\pi)^{n}}\int_{[0,1]^{2n}}\int_{\R^{n}}\prod_{j=1}^{n}p_{j}e^{\sum_{j=1}^{n}ip_{j}(X_{t_{j}}^{1}-X_{s_{j}}^{2})}dpdsdt \right ] \\  
        &\leq \frac{1}{(2\pi)^{n}}\int_{[0,1]^{2n}}\int_{\R^{n}}\prod_{j = 1}^{n}p_{j}\E \left [e^{\sum_{j=1}^{n}ip_{j}(X_{t_{j}}^{1}-X_{s_{j}}^{2})} \right]dpdsdt.
\end{align*}
Let $\Delta=\{(t_{0},\dots, s_{\sigma(n)})|0=t_{0}<t_{1}<\cdots<t_{n}<1,0=s_{\sigma(0)}<s_{\sigma(1)}<\cdots<s_{\sigma(n)}<1\}$, and denote by $\Phi_{n}$ the set of all permutations of $\{1,\dots,n\}$. Write
\begin{align*}
    u_{j} &= t_{j}-t_{j-1}, 
    &u^{\prime}_{j} &= s_{\sigma(j)}-s_{\sigma(j-1)}, \\
    v_{j} &= \sum_{k=j}^{n}p_{j},
    &v^{\prime}_{j} &= \sum_{k=j}^{n}p_{\sigma(j)}.
\end{align*}

Due to the independence between $X^{1}$ and $X^{2}$ we obtain
\begin{align*}
        	\E[|\theta|^{n}] &\leq \frac{n!}{(2\pi)^{n}}\sum_{\sigma\in \Phi_{n}}\int_{\Delta}\int_{\R^{n}}\prod_{j=1}^{n}p_{j}\E\left[e^{\left(i\sum_{j=1}^{n}v_{j}\left(X_{t_{j}}^{1}-X^{1}_{t_{j-1}}\right)\right)}\right]\E\left[e^{\left(i\sum_{j=1}^{n}v^{'}_{\sigma(j)}\left(X_{s_{\sigma(j)}}^{2}-X_{s_{\sigma(j-1)}}^{2}\right)\right)}\right]dpdsdt \\
        	&\leq \frac{n!}{(2\pi)^{n}}\sum_{\sigma\in \Phi_{n}}\int_{\Delta}\int_{\R^n}\prod_{j=1}^{n}p_{j}e^{-\sum_{j=1}^{n}u_{j}v_{j}^{\alpha}}e^{-\sum_{j=1}^{n}u^{\prime}_{j}v_{j}^{\prime \alpha}}dpdsdt.
\end{align*}
Combining the fact that $e^{-cx^{\alpha}}<\frac{1}{1+cx^{\alpha}}$ when $c>0$ yields
\begin{align*}
      \E[|\theta|^{n}]\leq \frac{n!}{(2\pi)^{n}}\sum_{\sigma\in \Phi_{n}}\int_{\Delta}\int_{\R^n}\prod_{j=1}^{n}|p_{j}|\frac{1}{1+u_{j}v_{j}^{\alpha}}\frac{1}{1+u^{\prime}_{j}v_{j}^{\prime \alpha}}dpdsdt.
\end{align*}
Noting that $\prod_{j=1}^{n}|p_{j}|<\prod_{j=1}^{n}v_{j}^{2}+|v_{j}|+1$ since $p_{j}=v_{j}-v_{j-1}$, and utilising the Cauchy Schwarz inequality we obtain
\begin{align*}
        \E[|\theta|^{n}]&\leq \frac{n!}{(2\pi)^{n}}\sum_{\sigma\in \Phi_{n}}\int_{\Delta}\left(\int_{\R^n}\prod_{j=1}^{n}\frac{(v_{j}^{2}+|v_{j}|+1)}{(1+u_{j}v_{j}^{\alpha})^{2}}dp\right)^{\frac{1}{2}}\left(\int_{\R^n}\prod_{j=1}^{n}\frac{(v_{\sigma(j)}^{2}+|v_{\sigma(j)}|+1)}{(1+u^{\prime}_{j}v_{j}^{\prime \alpha})^{2}}dp\right)^{\frac{1}{2}}dsdt \\
        &\leq\frac{n!}{(2\pi)^{n}}\sum_{\sigma\in \Phi_{n}}\int_{\Delta}\left(\int_{\R^n}\prod_{j=1}^{n}\frac{(v_{j}^{2}+|v_{j}|+1)u^{\frac{3}{2}}_{j}}{(1+u_{j}|v_{j}|^{\alpha})^{2}u^{\frac{3}{2}}_{j}}dp\right)^{\frac{1}{2}}\left(\int_{\R^n}\prod_{j=1}^{n}\frac{(v_{\sigma(j)}^{2}+|v_{\sigma(j)}|+1)u^{\prime \frac{3}{2}}_{j}}{(1+u^{\prime}_{j}|v_{j}|^{\prime \alpha})^{2}u^{\prime\frac{3}{2}}_{j}}dp\right)^{\frac{1}{2}}dsdt.
\end{align*}
When $\frac{3}{2}<\alpha<2$, then we have $\int_{\R}\frac{v^{2}u^{\frac{3}{\alpha}}}{(1+u|v|^{\alpha})^{2}} dv = C <\infty$. Thus
\begin{align}
       	\E[|\theta|^{n}]&\leq \frac{n!}{(2\pi)^{n}}\sum_{\sigma\in \Phi_{n}}\int_{\Delta}C^{n}\prod_{j}^{n}u_{j}^{-\frac{3}{2\alpha}}u_{j}^{\prime-\frac{3}{2\alpha}}dsdt \nonumber \\
        &\leq \frac{(n!)^{2}C^{n}}{(2\pi)^{n}}\int_{\Delta}\prod_{j}^{n}u_{j}^{-\frac{3}{2\alpha}}u_{j}^{\prime-\frac{3}{2\alpha}}dsdt. \label{eqn:upperbound}
\end{align}
By \Cref{Beta function} we can upper bound the integral in \cref{eqn:upperbound} to obtain
\begin{align}
\begin{split}
\label{eq:moments for stable processes}
       \E[|\theta|^{n}]& \leq \frac{C^{n}(n!)^{2}}{(2\pi)^{n}\Gamma(n(1-\frac{3}{2\alpha})+1)^{2}} \\
        & \leq (n!)^{\frac{3}{\alpha}}C_{even}^{n},
\end{split}
\end{align}
where the second inequality is obtained via \Cref{lmag}. This handles the even moment case. 

The odd moment case can be tackled by combining the result on the even moments (\cref{eq:moments for stable processes}) with Jensen's inequality. Assuming $n$ is odd, and utilising Jensen's inequality, we obtain
\begin{align}
\begin{split}
    	\E[|\theta|^{n}]&=\E \left [|\theta|^{n\frac{n+1}{n+1}}\right]  \\
    	&\leq \E\left[|\theta|^{n+1} \right]^{\frac{n}{n+1}} \\
    	&\leq C^{n}_{even}(\left(n+1)!\right)^{\frac{3n}{\alpha(n+1)}} \\
    	&\leq C^{n}_{even}(n!)^{\frac{3}{\alpha}}\left(\frac{n+1}{n}\right)^{\frac{(n+1)3}{\alpha}} \\
    	&\leq C^{n}_{odd}(n!)^{\frac{3}{\alpha}},
\end{split}
\end{align}
where we have obtained the preceding 2nd inequality via \cref{eq:moments for stable processes}. Regarding the 3rd inequality, we have applied \Cref{lmag} to $((n+1)!)^{\frac{n}{n+1}}(\frac{n}{n+1})^{n+1}$:
\begin{align*}
    ((n+1)!)^{\frac{n}{n+1}}\left (\frac{n}{n+1}\right )^{n+1}&\leq \Gamma\left (\left (\frac{n}{n+1}\right )(n+1)+1 \right)
\end{align*}
and thus
\begin{align*}
    ((n+1)!)^{\frac{n}{n+1}}&\leq (n!)\left (\frac{n+1}{n}\right )^{n+1}.
\end{align*}
as claimed. Hence for $0\leq\beta<\frac{\alpha}{3}$ and $n\in\N$ we have
\begin{align}
\label{constant for stable}
    \E\left[|\theta|^{\beta n}\right]\leq \E\left[|\theta|^{n}\right]^\beta\leq K^{n\beta}(n!)^{\frac{3\beta}{\alpha}},
\end{align}
where $K=\max(C_{even},C_{odd})$. Hence,
\begin{align*}  
	\E\left[e^{M|\theta|^{\beta}}\right]&=\sum_{n=0}^{\infty}\frac{M^{n}\E[|\theta|^{\beta n}]}{n!} \\
    	&\leq \sum_{n=0}^{\infty}M^{n}K^{n}(n!)^{\frac{3\beta}{\alpha}-1}< \infty.
\end{align*}
\end{proof}

We repeat that this includes the Brownian motion case, and isolate it as a corollary.

\begin{corollary}
\label{DILT expoential integrability for Brownian motion}
Let $\alpha = 2$ (the Brownian motion case) and $\beta \in [0, \frac{2}{3})$. Then there exists a constant $M > 0$ such that $\E[\exp \{ M|\theta|^{\beta} \}]<\infty$.
\end{corollary}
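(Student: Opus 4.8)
The plan is almost immediate. \Cref{DILT exponential integrability} is stated for every $\alpha \in (\frac{3}{2}, 2]$, so the endpoint $\alpha = 2$ is already covered, and in that case the admissible exponent range $\beta \in [0, \frac{\alpha}{3})$ is precisely $\beta \in [0, \frac{2}{3})$. Thus the corollary is just the specialisation of \Cref{DILT exponential integrability} to $\alpha = 2$, and I would simply invoke that theorem.

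For the reader's peace of mind I would point out that the only step in the proof of \Cref{DILT exponential integrability} phrased for the open interval $\frac{3}{2} < \alpha < 2$ --- the finiteness of $\int_{\R} \frac{v^{2}u^{3/\alpha}}{(1+u|v|^{\alpha})^{2}}\,dv$ --- in fact holds at $\alpha = 2$ as well. Substituting $w = u^{1/\alpha} v$ makes the $u$-dependence cancel and turns the integral into $\int_{\R} \frac{w^{2}}{(1+|w|^{\alpha})^{2}}\,dw$, whose integrand decays like $|w|^{2-2\alpha}$ with $2 - 2\alpha < -1$; for $\alpha = 2$ this is just $\int_{\R}\frac{w^{2}}{(1+w^{2})^{2}}\,dw < \infty$. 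Consequently the entire argument --- the bound $\prod_j |p_{j}| < \prod_j (v_{j}^{2} + |v_{j}| + 1)$, the Cauchy--Schwarz split into the two independent factors coming from $X^{1}$ and $X^{2}$, the Beta-function estimate (\Cref{Beta function}) for the integral over the simplex $\Delta$, and the Stirling-type bound (\Cref{lmag}) taming the resulting $\Gamma$-factors --- carries over verbatim, giving $\E[|\theta|^{n}] \leq K^{n}(n!)^{3/2}$ and hence $\E[e^{M|\theta|^{\beta}}] < \infty$ for sufficiently small $M$ whenever $\beta < \frac{2}{3}$.

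There is no genuine obstacle here: the corollary is isolated purely because the Brownian motion case is the one of principal interest for the physical applications described in the introduction. The only point deserving explicit mention is the boundary value $\alpha = 2$ in the integrability step above, which is why I would spell out the substitution rather than leave it implicit.
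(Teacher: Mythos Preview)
Your proposal is correct and matches the paper's approach: the paper offers no separate proof for this corollary, simply isolating the $\alpha=2$ instance of \Cref{DILT exponential integrability}. Your additional remark checking the finiteness of $\int_{\R}\frac{v^{2}u^{3/\alpha}}{(1+u|v|^{\alpha})^{2}}\,dv$ at $\alpha=2$ is a welcome clarification, since the proof of the theorem literally writes ``$\frac{3}{2}<\alpha<2$'' at that step even though the statement covers $\alpha\in(\frac{3}{2},2]$.
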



\section{Exponential integrability for derivative of self-intersection local-time}
\label{sec:mainresultsSLT}
\noindent
In this section we state and prove results regarding the exponential integrability of DSLT for symmetric $\alpha$-stable processes and Brownian motion.

Let $X$ be a symmetric $\alpha$-stable process. Consider its DSLT, which can be expressed as
\begin{align*}
	\hat{\theta} = \frac{i}{2\pi}\int_{0}^{1}\int_{0}^{t}\int_{\R}pe^{ip(X_{t}-X_{s})}dpdsdt,
\end{align*}
whose existence is proven by Rosen \citep{rosen2005derivatives}.

\begin{thm}
\label{thm:DSLTsymalpha}
 Suppose the index of stability for $X$ is $\alpha \in (\frac{4}{3}, 2]$ and let $\gamma \in [0, \frac{2\alpha}{6+\alpha})$. Then there exists a constant $M > 0$ such that $\E[\exp \{ M|\hat{\theta}|^{\gamma} \}]<\infty$.    
\end{thm}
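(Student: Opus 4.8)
The plan is to obtain this from the intersection‑local‑time bound of \Cref{DILT exponential integrability} by means of Le Gall's decomposition of self‑intersection local time, as developed in \Cref{appen:LeGall}. Write $\hat\theta_{[a,b]}$ for the DSLT functional attached to the time region $\{a\le s\le t\le b\}$, so that $\hat\theta=\hat\theta_{[0,1]}$; splitting a dyadic interval $I=[a,b]$ at its midpoint $m$ gives the formal identity
\[
\hat\theta_I=\hat\theta_{[a,m]}+\hat\theta_{[m,b]}+\theta_{[a,m]\times[m,b]},
\]
where $\theta_{[a,m]\times[m,b]}=\frac{i}{2\pi}\int_m^b\int_a^m\int_\R p\,e^{ip(X_t-X_s)}\,dp\,ds\,dt$ is an intersection‑type cross term. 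Because $X$ is a \emph{symmetric} stable process — so that its increments are reversible and invariant under a change of sign — the Markov property identifies $\theta_{[a,m]\times[m,b]}$, in law, with the DILT $\theta$ of two independent symmetric $\alpha$‑stable processes run over an interval of length $(b-a)/2$. Iterating over the dyadic intervals $I^{(k)}_i=[(i-1)2^{-k},i2^{-k}]$ produces the telescoping identity
\[
\hat\theta=\gamma_K+\sum_{k=0}^{K-1}S_k,\qquad \gamma_K=\sum_{i=1}^{2^K}\hat\theta_{I^{(K)}_i},\quad S_k=\sum_{i=1}^{2^k}\theta^{(i)}_k,
\]
in which, for each fixed $k$, the $2^k$ cross terms $\theta^{(i)}_k$ are independent (being functionals of the increments of $X$ over disjoint intervals), each distributed as $2^{-a(k+1)}\theta$ with $a:=2-\tfrac2\alpha$ by self‑similarity, while $\gamma_K$ is a sum of $2^K$ independent copies of $2^{-aK}\hat\theta$. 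All the summands are centred, since $\E\theta=\E\hat\theta=0$: the relevant $p$‑integral $\int_\R p\,e^{-c|p|^\alpha}\,dp$ vanishes by oddness.

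First I would discard the diagonal remainder. As the summands of $\gamma_K$ are i.i.d.\ and centred, $\E[\gamma_K^2]=2^K\cdot 2^{-2aK}\,\E[\hat\theta^2]=2^{(1-2a)K}\,\E[\hat\theta^2]$, which vanishes as $K\to\infty$ exactly when $a>\tfrac12$, i.e.\ $\alpha>\tfrac43$; this is the origin of the hypothesis $\alpha\in(\tfrac43,2]$. (That $\E[\hat\theta^2]<\infty$ — indeed that $\hat\theta\in L^n$ for every $n$ — is Rosen's existence result \citep{rosen2005derivatives}, and the analogous $L^n$‑decay of $\gamma_K$ is obtained in the same way.) Consequently $\hat\theta=\sum_{k\ge0}S_k$ in $L^n$, once the series is shown to converge there.

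Next I would estimate the moments. The Marcinkiewicz--Zygmund (Khintchine) inequality applied to the independent centred summands of $S_k$ gives, for $n\ge2$,
\[
\|S_k\|_n\le C\sqrt{n}\,\Big\|\Big(\textstyle\sum_{i=1}^{2^k}(\theta^{(i)}_k)^2\Big)^{1/2}\Big\|_n\le C\sqrt{n}\,\Big(\textstyle\sum_{i=1}^{2^k}\|\theta^{(i)}_k\|_n^2\Big)^{1/2}=C\sqrt{n}\,2^{k/2}\,2^{-a(k+1)}\|\theta\|_n\le C'\sqrt{n}\,2^{(1/2-a)k}\|\theta\|_n,
\]
so by Minkowski's inequality and summing the geometric series (convergent precisely when $a>\tfrac12$, i.e.\ $\alpha>\tfrac43$ again),
\[
\|\hat\theta\|_n\le\sum_{k\ge0}\|S_k\|_n\le C''\sqrt{n}\,\|\theta\|_n .
\]
Inserting the moment bound $\E[|\theta|^n]\le C^n(n!)^{3/\alpha}$ from the proof of \Cref{DILT exponential integrability}, together with $n^{n/2}\le e^{n/2}(n!)^{1/2}$, yields $\E[|\hat\theta|^n]\le\widetilde C^{\,n}(n!)^{\frac12+\frac3\alpha}$ for even $n$; the odd‑$n$ case follows via Jensen's inequality exactly as in the proof of \Cref{DILT exponential integrability}. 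Finally, for $0\le\gamma<\tfrac{2\alpha}{6+\alpha}$ and $M>0$ small,
\[
\E\!\left[e^{M|\hat\theta|^\gamma}\right]=\sum_{n\ge0}\frac{M^n\,\E[|\hat\theta|^{\gamma n}]}{n!}\le\sum_{n\ge0}M^n\widetilde C^{\,\gamma n}(n!)^{\gamma(\frac12+\frac3\alpha)-1}<\infty,
\]
since $\gamma(\tfrac12+\tfrac3\alpha)<1$ is equivalent to $\gamma<\tfrac{2\alpha}{6+\alpha}$.

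I expect the main obstacle to lie in making the decomposition rigorous rather than in the moment bookkeeping: one must carry the heat‑kernel regularization $\rho_\epsilon$ through the dyadic splitting so that the formal identities above become genuine $L^n$ identities, and one must verify that each cross term is really distributed as a scaled copy of the DILT $\theta$ — this is where the Markov property, the reversibility of Lévy increments, and the symmetry of $X$ are used, and it is the point at which the restriction to \emph{symmetric} stable processes is essential. The remaining steps use only \Cref{DILT exponential integrability} and standard inequalities, and for the final assembly I would appeal to the treatment of Le Gall's method in \Cref{appen:LeGall}.
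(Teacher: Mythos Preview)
Your proposal is correct and reaches the same threshold $\gamma<\tfrac{2\alpha}{6+\alpha}$ as the paper, but the assembly is organised differently. The paper follows Le Gall's original exponential scheme: it fixes a sequence $b_N=\prod_{j=2}^N(1-2^{-a(j-1)})$, applies H\"older's inequality to peel off the outermost dyadic layer from $\E\big[\exp\{b_N|\sum_{n\le N}\sum_k\hat\theta_{n,k}|^\gamma\}\big]$, uses \Cref{Moments inequality authentic} and \cref{constant for stable} to bound the peeled layer by a constant $C$ uniform in $N$, and iterates to obtain the product bound $\prod_j C^{2^{-a(j-1)}}$; the condition on $\alpha,\gamma$ emerges from the requirement that the inner series $\sum_l (\ldots)(l!)^{\frac{3\gamma}{\alpha}+\frac{\gamma}{2}-1}$ be bounded in $N$. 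You instead go through an explicit moment bound: Marcinkiewicz--Zygmund at each level gives $\|S_k\|_n\le C\sqrt{n}\,2^{(1/2-a)k}\|\theta\|_n$, Minkowski sums the geometric series (this is where $\alpha>\tfrac43$ enters for you), and you arrive at $\E[|\hat\theta|^n]\le \widetilde C^{\,n}(n!)^{1/2+3/\alpha}$, from which the exponential integrability follows by Taylor expansion. Your route is somewhat more transparent and delivers the moment estimate on $\hat\theta$ as a byproduct; the paper's iterated H\"older scheme is closer to Le Gall's template and bypasses the intermediate moment bound entirely. The underlying ingredients --- the dyadic decomposition of \Cref{appen:LeGall}, the scaling $\hat\theta_{n,k}\stackrel{d}{=}2^{2n(1/\alpha-1)}\theta$, the DILT bound \cref{constant for stable}, and a Khintchine-type inequality (your MZ versus the paper's \Cref{Moments inequality authentic}) --- are the same.
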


\begin{proof}
Before we proceed with the proof, we will utilise the scheme of Le Gall from \citep{le1994exponential} which we briefly describe in \Cref{appen:LeGall}, in order to rewrite $\hat{\theta}$ as:
\begin{align*}
    \hat{\theta}&=\lim_{N\to \infty}\sum_{n=1}^{N}\sum_{k=1}^{2^{n-1}}\hat{\theta}_{n,k}, \\
    \hat{\theta}_{n,k}&:= \int_{2^{-n}(2k-2)}^{2^{-n}(2k-1)}\int_{2^{-n}(2k-1)}^{2^{-n}2k}\int_{\R}\frac{i}{2\pi}pe^{ip(X_{t}-X_{s})}dpdsdt.
\end{align*}
By the self-similarity property of symmetric $\alpha$-stable processes,
\begin{align}
\label{stable scaling}
     \hat{\theta}_{n,k}\stackrel{d}=2^{2n(\frac{1}{\alpha}-1)}\theta,
\end{align}
   where $\theta$ is the DILT of two independent symmetric stable processes with the same index of stability $\alpha$. Note that for each fixed $n \in \N$, the $\hat \theta_{n,k}$ are mutually independent for $k = 1, \dots, 2^{n-1}$. To begin with we can construct $b_{N}=\prod_{j=2}^{N}(1-2^{-a(j-1)})$ with $0<a<1$. Let us define $M:=\lim_{N\to \infty}b_{N}$. We then consider the following expectation:
\begin{align*}
   \E\left[\exp\left\{b_{N}\left|\sum_{n=1}^{N}\sum_{k=1}^{2^{n-1}}\hat{\theta}_{n,k}\right|^{\gamma}\right\}\right].
\end{align*}
Using \Cref{inequality beta}, we obtain
        \begin{align*}
          \E\left[\exp\left\{b_{N}\left|\sum_{n=1}^{N}\sum_{k=1}^{2^{n-1}}\hat{\theta}_{n,k}\right|^{\gamma}\right\}\right]&\leq \E\left[\exp\left\{b_{N}\left|\sum_{n=1}^{N-1}\sum_{k=1}^{2^{n-1}}\hat{\theta}_{n,k}\right|^{\gamma}+b_{N}\left|\sum_{k=1}^{2^{N-1}}\hat{\theta}_{N,k}\right|^{\gamma}\right\}\right].
        \end{align*}
H\"older's inequality yields
        \begin{align*}
         \E\left[\exp\left\{b_{N}\left|\sum_{n=1}^{N}\sum_{k=1}^{2^{n-1}}\hat{\theta}_{n,k}\right|^{\gamma}\right\}\right]&\leq\E\left[\exp\left\{b_{N-1}\left|\sum_{n=1}^{N-1}\sum_{k=1}^{2^{n-1}}\hat{\theta}_{n,k}\right|^{\gamma}\right\}\right]^{1-2^{-a(N-1)}}\\
         &\times \E\left[\exp\left\{b_{N}2^{a(N-1)}\left|\sum_{k=1}^{2^{N-1}}\hat{\theta}_{N,k}\right|^{\gamma}\right\}\right]^{2^{-a(N-1)}}.
        \end{align*}
Since $\E[\exp{|X|}]>1$ for any random variable $X$,  we can upper bound the quantity by dropping the index $1-2^{-a(N-1)}$,
        \begin{align*}
          \E\left[\exp\left\{b_{N}\left|\sum_{n=1}^{N}\sum_{k=1}^{2^{n-1}}\hat{\theta}_{n,k}\right|^{\gamma}\right\}\right]&\leq \E\left[\exp\left\{b_{N-1}\left|\sum_{n=1}^{N-1}\sum_{k=1}^{2^{n-1}}\hat{\theta}_{n,k}\right|^{\gamma}\right\}\right]\\
          &\times\E\left[\exp\left\{b_{N}2^{a(N-1)}\left|\sum_{k=1}^{2^{N-1}}\hat{\theta}_{N,k}\right|^{\gamma}\right\}\right]^{2^{-a(N-1)}}.
        \end{align*}
By the self-similarity property of the $\alpha$-stable process and the fact that each $\hat{\theta}_{N,k}$ is independent and has the same distribution as $2^{2N(\frac{1}{\alpha}-1)}\theta$, we may use $\theta_{k}$ to represent the independent copies of $\theta$. We can then rewrite the upper bound as follows
        \begin{align*}
          \E\left[\exp\left\{b_{N}\left|\sum_{n=1}^{N}\sum_{k=1}^{2^{n-1}}\hat{\theta}_{n,k}\right|^{\gamma}\right\}\right]&\leq\E\left[\exp\left\{b_{N-1}\left|\sum_{n=1}^{N-1}\sum_{k=1}^{2^{n-1}}\hat{\theta}_{n,k}\right|^{\gamma}\right\}\right]\\
          &\times\E\left[\exp\left\{b_{N}2^{a(N-1)}2^{2N\left(\frac{1}{\alpha}-1\right)\gamma}\left|\sum_{k=1}^{2^{N-1}}\theta_{k}\right|^{\gamma}\right\}\right]^{2^{-a(N-1)}}.
        \end{align*}
By the monotone convergence theorem, we obtain
        \begin{align*}
         \E\left[\exp\left\{b_{N}\left|\sum_{n=1}^{N}\sum_{k=1}^{2^{n-1}}\hat{\theta}_{n,k}\right|^{\gamma}\right\}\right]&\leq\E\left[\exp\left\{b_{N-1}\left|\sum_{n=1}^{N-1}\sum_{k=1}^{2^{n-1}}\hat{\theta}_{n,k}\right|^{\gamma}\right\}\right]\\
         &\times\left(\sum_{l=0}^{\infty}\frac{b_{N}^{l}2^{alN-al+\frac{2Nl\gamma}{\alpha}-2Nl\gamma}}{l!}\E\left[\left|\sum_{k=1}^{2^{N-1}}\theta_{k}\right|^{l\gamma}\right]\right)^{2^{-a(N-1)}}.
        \end{align*}
Utilising Jensen's inequality, we obtain 
     \begin{align*}
         \E\left[\exp\left\{b_{N}\left|\sum_{n=1}^{N}\sum_{k=1}^{2^{n-1}}\hat{\theta}_{n,k}\right|^{\gamma}\right\}\right]&\leq\E\left[\exp\left\{b_{N-1}\left|\sum_{n=1}^{N-1}\sum_{k=1}^{2^{n-1}}\hat{\theta}_{n,k}\right|^{\gamma}\right\}\right]\\
         &\times\left(\sum_{l=0}^{\infty}\frac{b_{N}^{l}2^{alN-al+\frac{2Nl\gamma}{\alpha}-2Nl\gamma}}{l!}\E\left[\left|\sum_{k=1}^{2^{N-1}}\theta_{k}\right|^{l}\right]^{\gamma}\right)^{2^{-a(N-1)}}.
        \end{align*}
According to \Cref{Moments inequality authentic} and \cref{constant for stable}, we can upper bound the expectation of a sum of independent random variables by
\begin{align*}
\E\left[\exp\left\{b_{N}\left|\sum_{n=1}^{N}\sum_{k=1}^{2^{n-1}}\hat{\theta}_{n,k}\right|^{\gamma}\right\}\right]&\leq\E\left[\exp\left\{b_{N-1}\left|\sum_{n=1}^{N-1}\sum_{k=1}^{2^{n-1}}\hat{\theta}_{n,k}\right|^{\gamma}\right\}\right]\\
         &\times\left(\sum_{l=0}^{\infty}K^{l\gamma}b_{N}^{l}2^{Nl(a+\frac{2\gamma}{\alpha}-\frac{3\gamma}{2})-al+\gamma+2l\gamma}(l!)^{\frac{3\gamma}{\alpha}+\frac{\gamma}{2}-1}\right)^{2^{-a(N-1)}},
\end{align*}
where $K$ is the same constant as in \cref{constant for stable}. 
When $\frac{4}{3}<\alpha\leq 2$, $0<\gamma<\frac{2\alpha}{\alpha+6}$, we can choose $a\in(0,1)$ such that the sum $$\sum_{l=0}^{\infty}K^{l\gamma}b_{N}^{l}2^{Nl(a+\frac{2\gamma}{\alpha}-\frac{3\gamma}{2})-al+\gamma+2l\gamma}(l!)^{\frac{3\gamma}{\alpha}+\frac{\gamma}{2}-1}$$ is bounded for all $N$. Let us denote one of the bounds as $C$. Then
\begin{align*}
    \E\left[\exp\left\{b_{N}\left|\sum_{n=1}^{N}\sum_{k=1}^{2^{n-1}}\hat{\theta}_{n,k}\right|^{\gamma}\right\}\right]\leq&\E\left[\exp\left\{b_{N-1}\left|\sum_{n=1}^{N-1}\sum_{k=1}^{2^{n-1}}\hat{\theta}_{n,k}\right|^{\gamma}\right\}\right]C^{2^{-a(N-1)}}\\
    &\leq \prod_{j=1}^{N}C^{2^{-a(j-1)}}.
\end{align*}
Therefore according to Fatou's Lemma, we obtain
\begin{align*}
    \E\left[\exp\left\{M\left|\hat{\theta}\right|^{\gamma}\right\}\right]&\leq \liminf_{N\to \infty}\E\left[\exp\left\{M\bigg|\sum_{n=1}^{N}\sum_{k=1}^{2^{n-1}}\hat{\theta}_{n,k}\bigg|^{\gamma}\right\}\right]\\
    &\leq \lim_{N\to \infty}\prod_{j=1}^{N}C^{2^{-a(j-1)}}< \infty.
\end{align*}
It is well known that $\lim_{N\to \infty}\prod_{j=1}^{N}C^{2^{-a(j-1)}}$ converges if and only if $\lim_{N\to\infty}\sum_{j=1}^{N}2^{-a(j-1)}\ln C$ converges. Therefore the upper bound is finite when $a$ is positive. This completes the proof. 
\end{proof}
We reiterate that the preceding theorem includes the Brownian motion case, and isolate it as a corollary.
\begin{corollary}
Let $\alpha = 2$ (the Brownian motion case) and $\gamma \in [0, \frac{1}{2})$. Then there exists a constant $M > 0$  such that $\E[\exp\{M|\hat{\theta}|^{\gamma}\}]<\infty$.
\end{corollary}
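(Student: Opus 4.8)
The plan is to simply specialize \Cref{thm:DSLTsymalpha} to the Brownian case $\alpha = 2$. First I would check that $\alpha = 2$ lies in the admissible range $(\frac{4}{3}, 2]$ required by the theorem, which it does (it is the right endpoint). Next I would compute the upper bound on the exponent appearing in the theorem's conclusion: with $\alpha = 2$ we have
\begin{align*}
    \frac{2\alpha}{6 + \alpha} = \frac{4}{8} = \frac{1}{2},
\end{align*}
so the condition $\gamma \in [0, \frac{2\alpha}{6+\alpha})$ becomes exactly $\gamma \in [0, \frac{1}{2})$, matching the hypothesis of the corollary.

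With these two observations in hand, the conclusion is immediate: \Cref{thm:DSLTsymalpha} guarantees the existence of a constant $M > 0$ with $\E[\exp\{M|\hat\theta|^\gamma\}] < \infty$ for every $\gamma$ in the stated range, and $\hat\theta$ is precisely the DSLT of Brownian motion when $\alpha = 2$. There is no genuine obstacle here — the statement is a direct corollary, and the only ``work'' is the arithmetic verifying that the stable-index threshold $\frac{2\alpha}{6+\alpha}$ collapses to $\frac{1}{2}$ at $\alpha = 2$. I would therefore present the proof as a one-line deduction, perhaps remarking for the reader's convenience that in the Brownian setting the self-similarity exponent $2n(\frac1\alpha - 1)$ used inside the proof of \Cref{thm:DSLTsymalpha} specializes to $-n$, so that $\hat\theta_{n,k} \stackrel{d}{=} 2^{-n}\theta$, but this is not logically necessary and could be omitted.
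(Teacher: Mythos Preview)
Your proposal is correct and matches the paper's own treatment: the paper simply states this as an immediate corollary of \Cref{thm:DSLTsymalpha} without further proof, and your arithmetic check that $\frac{2\alpha}{6+\alpha}\big|_{\alpha=2}=\frac12$ is exactly the observation needed.
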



\section{On the DILT of fractional Brownian motion}
\label{sec:DSLTfBM}
\noindent
We would like to extend our result to the case of fractional Brownian motion (fBm), however the lack of independent increments makes applying Le Gall's scheme directly problematic. It is therefore difficult to address the DSLT of fBm. The DILT is likely to be more tractable, albeit still difficult, as we now explain.

The property of local nondeterminism is generally used in place of independence when working with fractional Brownian motion. In this context, this property asserts that \citep{berman1973local},
\begin{align*}
\Var\left(\sum_{k=1}^{n}a_{k}(B_{t_{k}}^{H}-B_{t_{k-1}}^{H})\right)\geq c_{n,H}\sum_{k=1}^{n}a^{2}_{k}\Var\left(B_{t_{k}}^{H}-B_{t_{k-1}}^{H}\right)=c_{n,H}\sum_{k=1}^{n}a^{2}_{k}(t_{k}-t_{k-1})^{2H},
\end{align*}
where $c_{n,H}$ depends on $n$ and $H$. This is enough to show finiteness of all moments in certain cases, and existence of the process in $L^p(\Omega)$, but is not enough by itself when trying to prove exponential integrability, as we need to bound all moments simultaneously, and this requires strict knowledge of the constant $c_{n,H}$.

The paper \citep{guo2019higher} is devoted to DILT of fBm, and claims a result on exponential integrability, however we were unable to follow some of the arguments there, and ultimately found a counterexample to one of its results. Theorem 1 in that paper is claimed as follows.

\begin{theorem*}[{\citep[Theorem ~1]{guo2019higher}}]
    Let $B^{H_{1}}$ and $W^{H_{2}}$ be two independent d-dimensional fractional Brownian motions of Hurst parameter $H_{1}$ and $H_{2}$, respectively.
    \begin{enumerate}[label = (\roman*), ref = \roman*]
        \item \label{thm:Hu1} Assume $k=(k_{1},...,k_{d})$ is an index of nonnegative integers (meaning that $k_{1},...k_{d}$ are nongegative integers) satisfying
        \begin{align}
            \label{HU's condition}
            \frac{H_{1}H_{2}}{H_{1}+H_{2}}(|k|+d)<1,
        \end{align}
        where $|k|=k_{1}+\cdots+k_{d}.$ Then, the $k$-th order derivative intersection local time $L_{k,d}$ exists in $L^{p}(\Omega)$ for any $p\in [1,\infty)$, where
        $$L_{k,d,T}:=\frac{i^{|k|}}{(2\pi)^{d}}\int_{0}^{T}\int_{0}^{T}\int_{\R^{d}}\prod_{j=1}^{d}p^{k_{j}}_{j}e^{ip(B_{t}^{H_{1}}-W_{s}^{H_{2}})}dpdsdt.$$
        \item \label{thm:Hu2} Assume \cref{HU's condition} is satisfied. There is a strictly positive constant $C_{d,k,T}\in (0,\infty)$ such that
        $$\E\left[e^{C_{d,k,T}|L_{k,d}|^{\beta}}\right]<\infty,$$
        where $\beta=\frac{H_{1}+H_{2}}{2dH_{1}H_{2}}$.
    \end{enumerate}
\end{theorem*}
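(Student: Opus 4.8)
The plan is to follow the same strategy used to prove \Cref{DILT exponential integrability}, replacing the independence of the increments of the stable processes by the local nondeterminism of fractional Brownian motion. First I would reduce the claim to a moment bound of the form $\E[|L_{k,d,T}|^{n}] \le C^{n}(n!)^{1/\beta}$ for all $n \in \N$, since expanding $\E[\exp\{C_{d,k,T}|L_{k,d,T}|^{\beta}\}]$ as a Maclaurin series then yields a convergent sum once $C_{d,k,T}$ is taken small enough. For even $n$ one writes out the Fourier representation of $L_{k,d,T}^{n}$, moves the expectation inside the $2n$-fold time integral and the $n$-fold (vector) $p$-integral, and uses the independence of $B^{H_{1}}$ and $W^{H_{2}}$ to factor the joint characteristic function into two Gaussian characteristic functions of the form $\exp\{-\tfrac12\Var(\sum_{j}p_{j}\cdot B^{H_{1}}_{t_{j}})\}$ and $\exp\{-\tfrac12\Var(\sum_{j}p_{j}\cdot W^{H_{2}}_{s_{j}})\}$, which themselves factor over the $d$ coordinates.

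Next, restricting to the simplex $\Delta$ and summing over the permutations $\sigma \in \Phi_{n}$ exactly as in the stable case, I would change variables to the increments $u_{j}=t_{j}-t_{j-1}$ (and the primed analogues) and the partial sums $v_{j}=\sum_{k\ge j}p_{k}$, and apply the local nondeterminism inequality
\[
\Var\Big(\sum_{k=1}^{n}a_{k}(B^{H}_{t_{k}}-B^{H}_{t_{k-1}})\Big)\ \ge\ c_{n,H}\sum_{k=1}^{n}a_{k}^{2}(t_{k}-t_{k-1})^{2H}
\]
with $a_{k}=v_{k}$, turning each characteristic function into $\exp\{-\tfrac{c_{n,H_{1}}}{2}\sum_{j}v_{j}^{2}u_{j}^{2H_{1}}\}$ and similarly for $W^{H_{2}}$. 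From here the argument is the one already carried out in the paper: bound $e^{-cx}\le(1+cx)^{-1}$, dominate the polynomial weight $\prod_{j}|p_{j}|^{k_{j}}$ by a polynomial in the $v_{j}$, apply the Cauchy--Schwarz inequality to separate the $B^{H_{1}}$ and $W^{H_{2}}$ contributions, perform the coordinatewise $p$-integrals, and integrate the resulting negative powers of the $u_{j}$ over $\Delta$ using the Beta- and Gamma-function estimates (\Cref{Beta function} and \Cref{lmag}). Tracking exponents gives $\E[|L_{k,d,T}|^{n}]\le C^{n}(n!)^{\kappa}$ for a $\kappa$ read off from the power counting, and odd $n$ are then handled by Jensen's inequality as in the proof of \Cref{DILT exponential integrability}, yielding exponential integrability of order $\beta=1/\kappa$.

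The main obstacle is twofold, and it is precisely here that I expect the stated result to be too strong. First, the constant $c_{n,H}$ in the local nondeterminism inequality a priori depends on $n$; unless one can invoke a \emph{strong} local nondeterminism estimate with an $n$-independent constant, the factor $c_{n,H}^{-nd/2}$ produced by the $n$ copies is not absorbable into $C^{n}$ and the whole scheme breaks down. Second, even granting a uniform constant, the exponent $\kappa$ delivered by the power counting does not match the reciprocal of the claimed $\beta=\frac{H_{1}+H_{2}}{2dH_{1}H_{2}}$: the analogous computation for stable processes produces $\beta<\frac{\alpha}{3}$ rather than $\beta=\alpha$ (recall that the self-similarity index $1/\alpha$ plays the role of $H$), so one expects the true order of exponential integrability to be smaller than claimed by a comparable factor. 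In short, the step that looks routine --- reading $\beta$ off the Gamma-function bound for the simplex integral --- is in fact where the claimed theorem fails, which is consistent with the counterexample exhibited in this section.
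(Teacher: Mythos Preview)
Your proposal and the paper take fundamentally different stances toward this statement. The paper does \emph{not} prove the theorem: it disproves it. The paper selects the specific parameter choice $T=1$, $d=1$, $k=2$, $H_{1}=H_{2}=\tfrac12$ (which satisfies \cref{HU's condition}) and then shows by a direct lower-bound computation that $\E[L_{2,1,1}^{2}]=\infty$, contradicting both parts of the quoted theorem. The argument is elementary: write out the Fourier representation of the second moment, restrict the time integrals to subrectangles so the exponent factors, perform the time integrals to obtain an explicit kernel $K(x)$, and then observe that the resulting $p$-integral diverges because $\int_{\R}\frac{p^{4}}{(1+p^{2})^{2}}\,dp=\infty$.

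Your write-up, by contrast, lays out how one would \emph{attempt} to prove the theorem by imitating the stable-process argument, and then diagnoses two places where the attempt fails (the $n$-dependence of the local-nondeterminism constant, and the mismatch between the exponent the power counting actually produces and the claimed $\beta$). This is a sensible heuristic and your conclusion --- that the stated result is too strong --- is correct and in line with the paper. But as a response to the statement it is neither a proof nor a disproof: identifying obstructions in one particular proof strategy does not establish that the theorem is false. The paper's explicit second-moment counterexample does. If you want your answer to match the paper's treatment, replace the general discussion of obstacles with the concrete computation of $\E[L_{2,1,1}^{2}]$ for the parameters above.
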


If we choose $T=1$, $d=1$, $k=2$, $H_{1}=H_{2}=\frac{1}{2}$ which satisfies the condition \cref{HU's condition}, then according to this result we should have exponential integrability. However, we will be able to show that $\E[L_{2,1,1}^{2}]=\infty$, and this contradicts $\cref{thm:Hu1}$ and clearly precludes the exponential integrability of this process. Writing $B^{1/2} \equiv B$ and $W^{1/2} \equiv W$ we have
\begin{align*}
    \E[L_{2,1,1}^2]&=\frac{1}{4\pi^{2}}\int_{[0,1]^{4}}\int_{\R^{2}}p_{1}^{2}p_{2}^{2}\E\left[e^{ip_{1}(B_{t_{1}}-W_{s_{1}})+ip_{2}(B_{t_{2}}-W_{s_{2}})}\right]dpdsdt.
\end{align*}
Since the integrand is positive and $D_{t}=\{t_{1},t_{2}:0<t_{1}<\frac{1}{2},0<t_2-t_{1}<\frac{1}{2}\}\subset [0,1]^{2}$, so is $D_{s} = \{s_{1},s_{2}:0<s_{1}<\frac{1}{2},0<s_{2}-s_{1}<\frac{1}{2}\}$. Thus
\begin{align*}
    \E\left[L_{2,1,1}^{2}\right]&\geq \frac{1}{4\pi^{2}}\int_{D_{t}}\int_{D_{s}}\int_{\R^{2}}p_{1}^{2}p_{2}^{2}e^{-\frac{1}{2}(t_{2}-t_{1})p_{2}^{2}-\frac{1}{2}t_{1}(p_{1}+p_{2})^{2}}e^{-\frac{1}{2}(s_{2}-s_{1})p_{2}^{2}-\frac{1}{2}s_{1}(p_{1}+p_{2})^{2}}dpdsdt\\
    &=\frac{1}{4\pi^{2}}\int_{\R^{2}}p_{1}^{2}p_{2}^{2}K(p_{2})^{2}K(p_{1}+p_{2})^{2}dp,
\end{align*}
where 
\begin{align*}
K(x)=\int_{0}^{\frac{1}{2}}e^{-\frac{1}{2}x^{2}t}dt=
\begin{cases}
   \frac{1}{2}, &x=0,\\
    \frac{1-e^{-\frac{1}{4}x^{2}}}{\frac{1}{2}x^{2}}, &\text{otherwise}.
\end{cases} 
\end{align*}
By the construction of $K(x)$, we know there exists a number $\lambda>0$, such that when $|x|<\lambda$, $K(x)>\frac{1}{4}$. We also can find positive constants $c_{1}$ and ${c_{2}}$ such that $\frac{c_{1}}{1+x^{2}}<K(x)<\frac{c_{2}}{1+x^{2}}.$ Therefore,
\begin{align*}
    \E\left[L_{2,1,1}^{2}\right]&\geq \frac{1}{4\pi^{2}}\int_{\R}p_{2}^{2}K(p_{2})^{2}dp_{2}\int_{|p_{1}+p_{2}|<\lambda}p_{1}^{2}K(p_{1}+p_{2})^{2}dp_{1}\\
    &\geq  \frac{1}{64\pi^{2}}\int_{\R}p_{2}^{2}K(p_{2})^{2}dp_{2}\int_{|p_{1}+p_{2}|<\lambda}p_{1}^{2}dp_{1}\\
    &\geq \frac{C}{64\pi^{2}}\int_{\R}p_{2}^{4}K(p_{2})^{2}dp_{2}\\
    &\geq \frac{Cc_{2}^{2}}{64\pi^{2}}\int_{\R}\frac{p_{2}^{4}}{(1+p_{2}^{2})^{2}}dp_{2}=\infty.
\end{align*}
Therefore, $L_{2,1,1}$ does not exist in $L^{2}(\Omega)$. 

We remark that the method used in \citep{guo2019higher} is sophisticated, and it is to be hoped that the methods established there can be repaired in order to recover the correct result.

\appendix


\section{Miscellaneous properties}
\label{appen:misc}
\noindent
In this section we collect some of the technical estimates and facts which were used in the proofs of the theorems. Many of these facts can be found elsewhere, but we include proofs of most of them for the benefit of the reader.

The standard Gamma function is defined as follows:
\begin{align*}
	\Gamma(x)=\int_{0}^{\infty}t^{x-1}e^{-t}dt.
\end{align*}

This function is well defined except for negative integers, and satisfies $x\Gamma(x) = \Gamma(x+1)$. In this article we will only need to utilise the Gamma function with positive arguments. We require the following fact. 

\begin{lemma}
    The Gamma function is logarithmically convex; that is, $\ln\Gamma(x)$ is convex.
\end{lemma}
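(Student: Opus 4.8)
The plan is to prove that $\ln\Gamma(x)$ is convex on $(0,\infty)$ by exhibiting it as a limit (or supremum) of convex functions, or directly via the integral representation combined with the Cauchy--Schwarz (H\"older) inequality. The cleanest route is the latter. First I would recall the integral representation $\Gamma(x) = \int_0^\infty t^{x-1}e^{-t}\,dt$ and fix $x, y > 0$ together with $\lambda \in [0,1]$. The goal is the inequality $\Gamma(\lambda x + (1-\lambda)y) \le \Gamma(x)^{\lambda}\Gamma(y)^{1-\lambda}$, which upon taking logarithms is exactly midpoint-type convexity in the weighted form, hence convexity of $\ln\Gamma$.

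The key step is to write the integrand for the argument $\lambda x + (1-\lambda) y$ as a product that splits cleanly:
\begin{align*}
    t^{\lambda x + (1-\lambda)y - 1} e^{-t} = \left(t^{x-1}e^{-t}\right)^{\lambda}\left(t^{y-1}e^{-t}\right)^{1-\lambda},
\end{align*}
using $t^{\lambda x+(1-\lambda)y-1} = t^{\lambda(x-1)}t^{(1-\lambda)(y-1)}$ and $e^{-t} = e^{-\lambda t}e^{-(1-\lambda)t}$. Then I would apply H\"older's inequality with conjugate exponents $p = 1/\lambda$ and $q = 1/(1-\lambda)$ (handling the degenerate cases $\lambda \in \{0,1\}$ trivially) to the two factors $f(t) = (t^{x-1}e^{-t})^{\lambda}$ and $g(t) = (t^{y-1}e^{-t})^{1-\lambda}$, obtaining
\begin{align*}
    \Gamma(\lambda x + (1-\lambda)y) = \int_0^\infty f(t)g(t)\,dt \le \left(\int_0^\infty t^{x-1}e^{-t}\,dt\right)^{\lambda}\left(\int_0^\infty t^{y-1}e^{-t}\,dt\right)^{1-\lambda} = \Gamma(x)^{\lambda}\Gamma(y)^{1-\lambda}.
\end{align*}
Taking logarithms gives $\ln\Gamma(\lambda x + (1-\lambda)y) \le \lambda\ln\Gamma(x) + (1-\lambda)\ln\Gamma(y)$, which is the definition of convexity.

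There is no serious obstacle here; the only point requiring a word of care is the justification that H\"older applies, i.e.\ that the integrals on the right are finite for $x,y>0$ (which is the standard convergence of the Gamma integral) and that the endpoint cases $\lambda=0,1$ are trivial. If one prefers to avoid invoking H\"older, an alternative is to differentiate twice under the integral sign: $(\ln\Gamma)''(x) = \frac{\Gamma''(x)\Gamma(x) - \Gamma'(x)^2}{\Gamma(x)^2}$, and observe that the numerator is nonnegative by Cauchy--Schwarz applied to the functions $t^{(x-1)/2}e^{-t/2}$ and $(\ln t)\,t^{(x-1)/2}e^{-t/2}$ in $L^2((0,\infty))$, since $\Gamma'(x) = \int_0^\infty (\ln t)\,t^{x-1}e^{-t}\,dt$ and $\Gamma''(x) = \int_0^\infty (\ln t)^2\,t^{x-1}e^{-t}\,dt$. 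I would present the H\"older argument as the main proof since it avoids differentiation-under-the-integral technicalities, and perhaps remark on the second approach.
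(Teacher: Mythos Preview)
Your proof is correct. Your main argument (H\"older applied directly to the integrand $t^{\lambda x+(1-\lambda)y-1}e^{-t}$) is a different route from the paper's, which instead takes exactly your ``alternative'' approach: it computes $(\ln\Gamma)''(x) = \dfrac{\Gamma''(x)\Gamma(x)-(\Gamma'(x))^2}{\Gamma(x)^2}$, writes $\Gamma$, $\Gamma'$, $\Gamma''$ as the integrals $\int_0^\infty (\ln t)^k t^{x-1}e^{-t}\,dt$ for $k=0,1,2$, and invokes Cauchy--Schwarz to get nonnegativity of the numerator. So the paper's proof coincides with what you sketched as a remark rather than with your primary argument. Your direct H\"older approach has the advantage of avoiding differentiation under the integral sign; the paper's second-derivative approach gives strict convexity (since equality in Cauchy--Schwarz would force $\ln t$ to be constant a.e.), which your H\"older argument also yields but only after noting the equality case. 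Either is entirely adequate for the use made of the lemma later.
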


\begin{proof}
    Let $f(x)=\ln\Gamma(x)$, then
\begin{align*}
    f^{''}(x)&=\frac{\Gamma^{''}(x)}{\Gamma(x)}-\frac{(\Gamma^{'}(x))^{2}}{(\Gamma(x))^{2}} \\
    &=\frac{\int_{0}^{\infty}t^{x-1}e^{-t}dt\int_{0}^{\infty}(\ln t)^{2}t^{x-1}e^{-t}dt-(\int_{0}^{\infty}(\ln t)t^{x-1}e^{-t}dt)^{2}}{(\Gamma(x))^{2}}.
\end{align*}
Applying H\"older's inequality to the last line will show that $f^{''}(x)>0$. Consequently, $f(x)$ is convex.
\end{proof}

\begin{prop}[Gautschi's inequality]
    For any $s\in(0,1)$ and $x\geq 0$,
    $$x^{1-s}\Gamma(x+s)\leq \Gamma(x+1)\leq \Gamma(x+s)(x+s)^{1-s}.$$
\end{prop}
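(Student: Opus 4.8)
The plan is to obtain both inequalities from the logarithmic convexity of $\Gamma$ established in the preceding Lemma, combined with the functional equation $x\Gamma(x) = \Gamma(x+1)$; no integral estimates are needed.

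For the lower bound I would first dispose of the trivial case $x = 0$, where $x^{1-s}\Gamma(x+s) = 0 \le 1 = \Gamma(1)$ since $1-s > 0$. For $x > 0$, the key observation is that $x+s = (1-s)x + s(x+1)$ is a convex combination of the positive numbers $x$ and $x+1$, so log-convexity of $\Gamma$ gives $\Gamma(x+s) \le \Gamma(x)^{1-s}\Gamma(x+1)^{s}$. Substituting $\Gamma(x) = \Gamma(x+1)/x$ on the right and collecting the powers of $\Gamma(x+1)$ yields $\Gamma(x+s) \le x^{-(1-s)}\Gamma(x+1)$, which rearranges to the asserted left inequality $x^{1-s}\Gamma(x+s) \le \Gamma(x+1)$.

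For the upper bound, valid for all $x \ge 0$, I would instead write $x+1 = s(x+s) + (1-s)(x+1+s)$, a convex combination of the positive numbers $x+s$ and $x+1+s$; log-convexity then gives $\Gamma(x+1) \le \Gamma(x+s)^{s}\Gamma(x+1+s)^{1-s}$. Using $\Gamma(x+1+s) = (x+s)\Gamma(x+s)$ and simplifying the powers of $\Gamma(x+s)$ produces $\Gamma(x+1) \le (x+s)^{1-s}\Gamma(x+s)$, which is the right inequality.

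The only step that is not pure bookkeeping is spotting the two convex-combination identities for $x+s$ and for $x+1$; once these are written down, everything reduces to substitution via the functional equation. The minor points to verify are that the interpolation weights lie in $[0,1]$ (immediate from $s \in (0,1)$) and that $\Gamma$ is being evaluated only at positive arguments so that the Lemma applies — the latter being exactly why $x = 0$ must be treated separately in the lower bound. I do not anticipate any genuine obstacle.
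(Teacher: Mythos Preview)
Your proposal is correct and is essentially identical to the paper's own proof: both inequalities are obtained from the same two convex-combination identities $x+s=(1-s)x+s(x+1)$ and $x+1=s(x+s)+(1-s)(x+s+1)$ together with the functional equation, exactly as you describe. The only difference is that you explicitly dispose of the boundary case $x=0$ for the lower bound, which the paper's argument tacitly requires (since $\Gamma(x)$ is invoked) but does not address.
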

\begin{proof}
Since the Gamma function is logarithmically convex, for $0<s<1$,
\begin{align*}
    \Gamma(x+s)=\Gamma(x(1-s)+(x+1)s))&\leq \Gamma(x)^{(1-s)}\Gamma(x+1)^{s} \\
    &= x^{s-1}(x\Gamma(x))^{(1-s)}\Gamma(x+1)^s \\
    &= x^{s-1}\Gamma(x+1).
\end{align*}   

This proves the first inequality. The second follows similarly:
\begin{align*}
    \Gamma(x+1) = \Gamma((x+s)s+(x+s+1)(1-s))&\leq \Gamma(x+s)^{s}\Gamma(x+s+1)^{1-s} \\
    &= \Gamma(x+s)^s((x+s)\Gamma(x+s))^{1-s} \\
    &= (x+s)^{1-s}\Gamma(x+s).
\end{align*}
\end{proof}

\begin{lemma}
\label{lmag}
For any integer $n$ and $k\in (0,1)$,
\begin{align*}
\Gamma(kn)\leq((n-1)!)^{k},  \\
\Gamma(kn+1)\geq k^{n}(n!)^{k}.
\end{align*}
\end{lemma}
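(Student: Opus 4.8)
The two inequalities in \Cref{lmag} are both consequences of Gautschi's inequality combined with an induction on $n$, so the plan is to prove each by induction, pulling a factor out of the Gamma function at each step. For the first inequality $\Gamma(kn)\leq ((n-1)!)^{k}$, I would first reduce to the case $n\geq 2$ (for $n=1$ one needs $\Gamma(k)\le 1$ for $k\in(0,1)$, which follows from convexity of $\ln\Gamma$ together with $\Gamma(1)=1$ and $\lim_{x\downarrow 0}\Gamma(x)=+\infty$ being irrelevant — more simply, $\Gamma$ is decreasing on $(0,1]$ with $\Gamma(1)=1$, or just invoke that $\Gamma$ attains a value $\le 1$ on $(0,1)$; actually the cleanest is: by log-convexity $\Gamma(k)=\Gamma(k\cdot 1+(1-k)\cdot\! 1)\le\Gamma(1)=1$ fails to parse, so instead use $\Gamma(k+1)\le k^0\Gamma(k+1)$... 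I would simply note $\Gamma(k)\le\Gamma(k)\cdot\text{(something)}$; in practice one writes $\Gamma(k)k=\Gamma(k+1)\le 1$ via Gautschi with $x=0$? Gautschi at $x=0$ gives $0\le\Gamma(1)\le\Gamma(s)s^{1-s}$, i.e. $\Gamma(s)\ge s^{s-1}\ge 1$, the wrong direction — so for $n=1$ the first claim actually may require $k$ away from the endpoints or a separate small argument). The main induction step: write $\Gamma(kn)=\Gamma\big(k(n-1)+k\big)$ and apply Gautschi's inequality with $x=k(n-1)$ and $s=k$ to get $\Gamma(k(n-1)+k)\le \Gamma(k(n-1))\cdot (k(n-1))^{?}$ — more precisely the second form of Gautschi gives $\Gamma(x+1)\le(x+s)^{1-s}\Gamma(x+s)$, which rearranges to $\Gamma(x+s)\ge (x+s)^{s-1}\Gamma(x+1)$, again the wrong direction. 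So the correct move is to write $\Gamma(kn)$ directly and compare with $\Gamma(k(n-1))$ using the first Gautschi inequality $x^{1-s}\Gamma(x+s)\le\Gamma(x+1)$ with $x=k(n-1)$, $s=k(?)$, but $k(n-1)+k=kn$ only when... this needs $x+s = kn$ and $x=k(n-1)$, forcing $s=k\in(0,1)$, good, and $x+1=k(n-1)+1$, which is \emph{not} $k(n-1)$. Hence Gautschi alone does not telescope cleanly; one instead telescopes between $\Gamma(kn)$ and $\Gamma(k(n-1)+1)$ and then uses $\Gamma(k(n-1)+1)\le$ (induction-type bound).

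\textbf{Revised plan.} I would prove the \emph{second} inequality $\Gamma(kn+1)\ge k^{n}(n!)^{k}$ first, by induction on $n$, since it is the one actually invoked in \Cref{DILT exponential integrability}. Base case $n=1$: $\Gamma(k+1)\ge k\cdot 1^{k}=k$, which follows from the first Gautschi inequality with $x=0$, $s=k$: $0^{1-k}\Gamma(k)\le\Gamma(k+1)$ is vacuous, so instead use $x=1$: Gautschi gives $1^{1-k}\Gamma(1+k)\le\Gamma(2)=1$, i.e. $\Gamma(1+k)\le 1$, the wrong direction again — so the base case uses the \emph{other} Gautschi bound $\Gamma(x+1)\le\Gamma(x+s)(x+s)^{1-s}$ with $x=0$: $\Gamma(1)\le\Gamma(k)k^{1-k}$, giving $\Gamma(k)\ge k^{k-1}$, hence $\Gamma(k+1)=k\Gamma(k)\ge k^{k}\ge k$ since $k\in(0,1)$ makes $k^{k}\ge k$. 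For the induction step, write $\Gamma(kn+1)=\Gamma\big((k(n-1)+1)+(k)\big)$... but $k(n-1)+1+k=kn+1$ requires $k(n-1)+1$ and then adding $k$; write $x=k(n-1)+k = kn$ hmm. The clean choice: set $x=kn$ and apply the first Gautschi inequality with $s\in(0,1)$ chosen so $x+s$ relates to $k(n+1)+1$; since $kn+s=k(n+1)+1$ would need $s=k+1>1$, no. Alternatively relate $\Gamma(kn+1)$ to $\Gamma(k(n-1)+1)$: we have $kn+1 = (k(n-1)+1)+k$, so with $x=k(n-1)+1$ and $s=k$, first Gautschi gives $x^{1-k}\Gamma(x+k)\le\Gamma(x+1)$, i.e. $\Gamma(kn+1)\le (k(n-1)+1)^{-(1-k)}\Gamma(k(n-1)+2)$ — still the wrong direction. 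The \emph{second} Gautschi with these $x,s$: $\Gamma(x+1)\le\Gamma(x+k)(x+k)^{1-k}$, i.e. $\Gamma(k(n-1)+2)\le\Gamma(kn+1)(kn+1)^{1-k}$, giving a \emph{lower} bound $\Gamma(kn+1)\ge\Gamma(k(n-1)+2)(kn+1)^{k-1}$. Now $\Gamma(k(n-1)+2)=(k(n-1)+1)\Gamma(k(n-1)+1)\ge (k(n-1)+1)k^{n-1}((n-1)!)^{k}$ by induction, so $\Gamma(kn+1)\ge (k(n-1)+1)(kn+1)^{k-1}k^{n-1}((n-1)!)^{k}$; it then remains to check $(k(n-1)+1)(kn+1)^{k-1}\ge k\, n^{k}$, i.e. the elementary inequality $(k(n-1)+1)\ge k n^{k}(kn+1)^{1-k}$. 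This is the step I expect to be the main obstacle: verifying this last scalar inequality for all integers $n\ge 1$ and all $k\in(0,1)$. I would handle it by noting $kn+1\le k(n-1)+1 + k \le$ something, or by the weighted AM–GM bound $n^{k}(kn+1)^{1-k}\le kn+(1-k)(kn+1)/k$... more carefully, by concavity of $t\mapsto t^{k}$ applied at the two points $n$ and $(kn+1)/k$ I'd show $n^k ((kn+1)/k)^{1-k}\le k\cdot n + (1-k)(kn+1)/k = kn + n + (1-k)/k$... and then compare with $(k(n-1)+1)/k = n-1+1/k$. I will organize this as a short lemma-internal computation.

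\textbf{First inequality and conclusion.} Once the second inequality is established, the first, $\Gamma(kn)\le ((n-1)!)^{k}$, follows by a parallel induction using the first Gautschi bound in the favorable direction: from $x^{1-s}\Gamma(x+s)\le\Gamma(x+1)$ with $x=k(n-1)$... again one must track directions carefully, but now we seek an \emph{upper} bound on $\Gamma(kn)=\Gamma(k(n-1)+k)$, and the first Gautschi inequality with $x=k(n-1)$, $s=k$ reads $(k(n-1))^{1-k}\Gamma(kn)\le \Gamma(k(n-1)+1)=k(n-1)\Gamma(k(n-1))$ (using $x\Gamma(x)=\Gamma(x+1)$ with $x=k(n-1)$? no, $\Gamma(k(n-1)+1) = k(n-1)\Gamma(k(n-1))$ only if $k(n-1)$ plays the role of $x$, which it does). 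Hence $\Gamma(kn)\le (k(n-1))^{k}\,\Gamma(k(n-1))\le (k(n-1))^{k}((n-2)!)^{k} = (k(n-1)!/(n-2)! \cdot (n-2)!)^{k}$, wait: $(k(n-1))^k ((n-2)!)^k = (k(n-1)(n-2)!)^k = (k\,(n-1)!)^k \le ((n-1)!)^k$ since $k<1$. That closes the induction, with base case $n=2$: $\Gamma(2k)\le ((1)!)^{k}=1$; one checks this from $\Gamma(2k)\le\Gamma(2)^{?}$... I would get it from the first Gautschi inequality with $x=2k-1$ when $2k>1$, or directly from log-convexity noting $2k\in(0,2)$ and $\Gamma\le 1$ on the relevant subinterval when $2k\le 1$, with the intermediate values handled since $\max_{[1,2]}\Gamma = 1$ at the endpoints. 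I would write these base cases out explicitly but briefly. In summary: the proof is two inductions, each reducing $\Gamma(kn)$ or $\Gamma(kn+1)$ to the $(n-1)$ case via one application of Gautschi's inequality plus the functional equation $x\Gamma(x)=\Gamma(x+1)$, and the only non-mechanical part is the elementary scalar inequality needed to close the second induction, which I will verify via concavity of $t\mapsto t^{k}$.
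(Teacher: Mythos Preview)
Your plan for the second inequality $\Gamma(kn+1)\ge k^{n}(n!)^{k}$ is correct but takes a harder route than necessary. The paper also argues by induction via Gautschi, but with the parameters $x=k(n+1)$ and $s=1-k$ in the \emph{first} Gautschi bound $x^{1-s}\Gamma(x+s)\le\Gamma(x+1)$. This yields in one step
\[
\Gamma\big(k(n+1)+1\big)\;\ge\;(k(n+1))^{k}\,\Gamma(kn+1)\;\ge\;k^{k}(n+1)^{k}\cdot k^{n}(n!)^{k}\;\ge\;k^{n+1}\big((n+1)!\big)^{k},
\]
using only the trivial $k^{k}\ge k$ for $k\in(0,1)$. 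Your choice $x=k(n-1)+1$, $s=k$ in the second Gautschi bound forces a detour through the functional equation and the scalar inequality $(k(n-1)+1)\ge k\,n^{k}(kn+1)^{1-k}$; that inequality is true (weighted AM--GM gives $n^{k}(kn+1)^{1-k}\le kn+(1-k)(kn+1)$, and then $(1-k)^{2}(kn+1)\ge 0$ finishes it), but the paper's parametrization avoids this ``main obstacle'' entirely.

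For the first inequality $\Gamma(kn)\le((n-1)!)^{k}$ your base-case trouble is not a defect of your method: the inequality is false as stated. For $n=1$ it reads $\Gamma(k)\le 1$, but $\Gamma(k)>1$ for all $k\in(0,1)$ (e.g.\ $\Gamma(1/2)=\sqrt{\pi}$); more generally, for any fixed $n$ one has $\Gamma(kn)\to+\infty$ while $((n-1)!)^{k}\to 1$ as $k\downarrow 0$, so the bound fails for small $k$. Your induction step (Gautschi with $x=k(n-1)$, $s=k$, giving $\Gamma(kn)\le(k(n-1))^{k}\Gamma(k(n-1))$) is fine, but there is no true base case to anchor it. The paper's one-line argument for this part has a matching gap: the step $\ln\Gamma(kn)\le\ln\Gamma(kn+1-k)$ appeals implicitly to monotonicity of $\Gamma$, which does not hold on $(0,x_{0})$ with $x_{0}\approx 1.46$. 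Fortunately only the second inequality of \Cref{lmag} is ever invoked in the paper, so nothing downstream is affected.
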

\begin{proof}
By logarithmic convexity,
\begin{align*}
\ln\Gamma(kn)&\leq\ln\Gamma(kn+1-k)<k\ln\Gamma(n)+(1-k)\ln\Gamma(1)=k\ln\Gamma(n) 
\end{align*}
Thus,
\begin{align*}
    \Gamma(kn) \leq\Gamma(n)^{k} = ((n-1)!)^k.  
\end{align*}
For the lower bound, we proceed by induction. When $n=1$, by Gautschi's inequality,
$$\Gamma(k+1)\geq \Gamma(k+(1-k))k^{1-(1-k)}=k^{k}\geq k =k^{1}(1!)^{k},$$ where we have used that facts that $k \in (0,1)$ and $\Gamma(1)=1$. The claim therefore holds for first step.
Let's assume it holds for step $n$, so we have  $\Gamma(kn+1)\geq k^{n}(n!)^{k}$. Again applying Gautschi's inequality, taking $k(n+1)$ for $x$ and $1-k$ for $s$, yields the lower bound for $\Gamma(k(n+1)+1)$: 
\begin{align*}\Gamma(k(n+1)+1)&\geq (k(n+1))^{k}\Gamma(k(n+1)+(1-k))=k^k(n+1)^{k}\Gamma(kn+1)\\&\geq k (n+1)^{k}(n!)^{k}k^{n}=k^{n+1}((n+1)!)^{k}.
\end{align*}
\end{proof}

\begin{lemma}
\label{inequality beta}
    When $n\in\N$, $0<\beta<1$, 
    \begin{align*}
    \left|\sum_{k=1}^{n}a_{k}\right|^{\beta}\leq \sum_{k=1}^{n}|a_{k}|^{\beta}.
    \end{align*}
\end{lemma}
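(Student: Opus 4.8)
The statement to prove is the elementary inequality
\[
\Bigl|\sum_{k=1}^{n}a_{k}\Bigr|^{\beta}\leq \sum_{k=1}^{n}|a_{k}|^{\beta}
\]
for $n\in\N$ and $0<\beta<1$. The plan is to reduce everything to the single scalar inequality $(x+y)^{\beta}\leq x^{\beta}+y^{\beta}$ for $x,y\geq 0$, and then bootstrap via induction on $n$, using the triangle inequality to pass from $|\sum a_k|$ to $\sum|a_k|$ at the very start.

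First I would handle the two-term scalar case. Fix $x,y\ge 0$, not both zero, and consider $\phi(t)=(1+t)^{\beta}-1-t^{\beta}$ for $t\ge 0$; since $\phi(0)=0$ and $\phi'(t)=\beta\bigl((1+t)^{\beta-1}-t^{\beta-1}\bigr)\le 0$ because $\beta-1<0$ makes $s\mapsto s^{\beta-1}$ decreasing, we get $\phi(t)\le 0$, i.e. $(1+t)^{\beta}\le 1+t^{\beta}$. Taking $t=y/x$ (WLOG $x>0$) and multiplying through by $x^{\beta}$ gives $(x+y)^{\beta}\le x^{\beta}+y^{\beta}$. Equivalently one can cite concavity of $s\mapsto s^{\beta}$ on $[0,\infty)$ together with $0^\beta=0$, which forces subadditivity; either route is a couple of lines.

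Next I would run the induction on $n$. The base case $n=1$ is trivial. For the inductive step, by the ordinary triangle inequality $\bigl|\sum_{k=1}^{n+1}a_k\bigr|\le \bigl|\sum_{k=1}^{n}a_k\bigr|+|a_{n+1}|$, and since $x\mapsto x^{\beta}$ is nondecreasing on $[0,\infty)$ we may raise both sides to the power $\beta$. Applying the two-term scalar inequality with $x=\bigl|\sum_{k=1}^{n}a_k\bigr|$ and $y=|a_{n+1}|$, then the induction hypothesis to the first term, yields $\bigl|\sum_{k=1}^{n+1}a_k\bigr|^{\beta}\le \bigl|\sum_{k=1}^{n}a_k\bigr|^{\beta}+|a_{n+1}|^{\beta}\le \sum_{k=1}^{n+1}|a_k|^{\beta}$, completing the induction.

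There is essentially no obstacle here: the only mild subtlety is making sure the monotonicity of $x\mapsto x^\beta$ is invoked correctly when lifting the triangle inequality to the $\beta$-power, and handling the degenerate case where some terms vanish (which is automatic since $0^\beta=0$). The argument is self-contained and does not rely on any of the Gamma-function lemmas preceding it.
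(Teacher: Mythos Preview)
Your proof is correct and essentially identical to the paper's: both establish the two-term scalar inequality $(x+y)^{\beta}\le x^{\beta}+y^{\beta}$ by differentiating the auxiliary function $t\mapsto (1+t)^{\beta}-1-t^{\beta}$ (the paper uses its negative), then scale by $x^{\beta}$ and finish by induction after reducing to nonnegative terms via the triangle inequality.
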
    
\begin{proof}
It clearly suffices to assume that each $a_k$ is positive. If $f(x) = 1 + x^\beta -(1+x)^\beta$, then it is easy to see that $f'(x) > 0$ whenever $x>0$. Since $f(0)=0$, we have $1 + x^\beta -(1+x)^\beta > 0$ for $x > 0$. Replace $x$ by $\frac{b}{a}$, and multiply through by $a^\beta$ to obtain
    \begin{align}
    \label{concave}
    (a+b)^{\beta}\leq a^{\beta}+b^{\beta}.
    \end{align}
This method combined with an easy induction gives the general result.
\end{proof}
\begin{lemma}
    \label{index function inequality}
    When $a>1$ and $0<k<1$,
    \begin{align*}
    a^{k}>ak.
    \end{align*}
    \begin{proof}
        Consider function $f(x)=a^{x}-ax$, then it is easy to see that $f'(x)=a^{x-1}x-a<0$ for $0<x<1$. As such $f(x)>f(1)=0$ for $0<x<1$. Therefore $a^{k}>ak$ when $a>1$ and $0<k<1$.
    \end{proof}
\end{lemma}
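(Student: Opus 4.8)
The natural approach is to fix the base $a$ and read the inequality as a statement about the single real variable $k$. I would set $f(k) := a^{k} - ak$ on $[0,1]$ and aim to show $f(k) > 0$ for $k \in (0,1)$. The boundary values are immediate: $f(0) = 1 > 0$ and $f(1) = a - a = 0$. Differentiating gives $f'(k) = a^{k}\ln a - a$ and $f''(k) = a^{k}(\ln a)^{2} > 0$, so $f$ is strictly convex and $f'$ is strictly increasing on $[0,1]$. Hence $f'$ changes sign at most once, and the behaviour of $f$ on $[0,1]$ is governed by the single number $f'(1) = a(\ln a - 1)$.

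The key step is then the following: if $f'(1) \le 0$ — equivalently $a \le e$ — then monotonicity of $f'$ forces $f' \le 0$ throughout $[0,1]$, so $f$ is non-increasing there; since $f(1) = 0$ and $f$ is strictly convex (hence not constant on any subinterval), this yields $f(k) > f(1) = 0$ for every $k \in (0,1)$, which is exactly $a^{k} > ak$. All of this is routine one-variable calculus; the only point needing care is computing the derivative correctly, $\tfrac{d}{dk}a^{k} = a^{k}\ln a$.

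The main obstacle is a genuine one: the inequality as literally stated fails for large $a$ — taking $a = 9$, $k = \tfrac12$ gives $a^{k} = 3 < \tfrac92 = ak$ — so the argument above only closes under the extra hypothesis $1 < a \le e$, and I would amend the statement to that range (which is the only one in which the estimate gets invoked downstream, so nothing else is affected). Alternatively, one can replace the claim by the unconditionally valid bound $a^{k} < 1 + ak$ for $a > 1$ and $k \in (0,1)$: this follows at once from concavity of $t \mapsto t^{k}$, which gives $(1+t)^{k} \le 1 + kt$, and then with $t = a - 1$ one gets $a^{k} \le 1 + k(a-1) < 1 + ak$. This weaker-looking inequality is in fact what is needed in applications and requires no restriction on $a$.
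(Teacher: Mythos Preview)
Your core analysis is correct, and in fact sharper than the paper's: the inequality $a^{k}>ak$ genuinely fails for large $a$ (your example $a=9$, $k=\tfrac12$ is decisive), and the paper's own proof is simply wrong --- it writes $f'(x)=a^{x-1}x-a$ instead of the correct $f'(x)=a^{x}\ln a-a$, so the assertion ``$f'(x)<0$ for $0<x<1$'' is unsupported (and indeed false once $a>e$). Your convexity argument establishing the inequality under the added hypothesis $1<a\le e$ is valid and is the natural repair of the statement.

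Two corrections to your remarks on the downstream use, however. In the paper the lemma is invoked with $a=p+2$ and $k=(p+1)/(p+2)$ for arbitrary integers $p\ge 1$, so $a$ is unbounded and your restriction $a\le e$ does not cover the actual application. Moreover, your proposed alternative $a^{k}<1+ak$ points in the wrong direction: what is needed there is the \emph{lower} bound $(p+2)^{(p+1)/(p+2)}\ge p+1$, i.e.\ $a^{k}\ge ak$ for that particular pair. The good news is that the claim actually being proved at that spot, namely $((p+1)!)^{p/(p+1)}\ge p!$, is equivalent to $(p!)^{1/p}\le p+1$ and follows at once from the AM--GM inequality
\[
(1\cdot 2\cdots p)^{1/p}\le \frac{1+2+\cdots+p}{p}=\frac{p+1}{2}<p+1,
\]
so the faulty lemma is not needed at all.
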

\begin{lemma}[{\citep[Lemma ~4.5]{hu2015stochastic}}]
\label{Beta function}
Let $\alpha\in (-1+\epsilon,1)^{m}$ with $\epsilon>0$ and set $|\alpha|=\sum_{i=1}^{m}\alpha_{i}$. $T_{m}(t)=\{(r_{1},r_{2},...,r_{m})\in \R^{m}:0<r_{1}<...<r_{m}<t\}$. Then there is a constant $c$ such that 
$$J_{m}(t,\alpha)=\int_{T_{m}(t)}\prod_{i=1}^{m}(r_{i}-r_{i-1})^{\alpha_{i}}dr\leq \frac{c^{m}t^{|\alpha|+m}}{\Gamma(|\alpha|+m+1)},$$
where by convention, $r_{0}=0$.
\end{lemma}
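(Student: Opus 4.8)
The plan is to compute $J_m(t,\alpha)$ exactly as a ratio of Gamma functions and then bound the resulting numerator. First I would pass to increment coordinates $u_i = r_i - r_{i-1}$ (with $r_0 = 0$); this substitution has Jacobian $1$ and maps $T_m(t)$ bijectively onto the open simplex $\{(u_1,\dots,u_m) : u_i > 0,\ \sum_{i=1}^m u_i < t\}$, so that
\[
J_m(t,\alpha) = \int_{\{u_i > 0,\ \sum u_i < t\}} \prod_{i=1}^m u_i^{\alpha_i}\, du .
\]
Note that the hypothesis $\alpha_i \in (-1+\epsilon, 1)$ gives $\alpha_i + 1 \in (\epsilon, 2)$, which guarantees both that the integral converges near each face $u_i = 0$ and that $|\alpha| + m + 1 > 1$, so every Gamma value appearing below is finite and strictly positive.

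Next I would prove by induction on $m$ the Dirichlet-type identity
\[
J_m(t,\alpha) = \frac{\prod_{i=1}^m \Gamma(\alpha_i + 1)}{\Gamma(|\alpha| + m + 1)}\, t^{|\alpha| + m}.
\]
The base case $m = 1$ is just $\int_0^t u^{\alpha_1}\,du = t^{\alpha_1 + 1}/(\alpha_1+1)$, which matches since $\Gamma(\alpha_1 + 2) = (\alpha_1 + 1)\Gamma(\alpha_1 + 1)$. For the inductive step, integrating out $u_m$ last gives $J_m(t,\alpha) = \int_0^t u_m^{\alpha_m}\, J_{m-1}(t - u_m, \alpha')\, du_m$ with $\alpha' = (\alpha_1,\dots,\alpha_{m-1})$; substituting the inductive hypothesis for $J_{m-1}$ and evaluating the Beta integral $\int_0^t u^{\alpha_m}(t-u)^{|\alpha'| + m - 1}\,du = t^{|\alpha| + m}\,\Gamma(\alpha_m + 1)\Gamma(|\alpha'| + m)/\Gamma(|\alpha| + m + 1)$, the factor $\Gamma(|\alpha'| + m)$ cancels and the product telescopes to the claimed formula.

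Finally, since $\Gamma$ is continuous and strictly positive on the compact interval $[\epsilon, 2]$, it attains a finite maximum there; setting $c := \max_{x \in [\epsilon,2]} \Gamma(x)$ (which depends only on $\epsilon$) and using $\alpha_i + 1 \in (\epsilon, 2)$ we get $\prod_{i=1}^m \Gamma(\alpha_i + 1) \le c^m$, and hence
\[
J_m(t,\alpha) \le \frac{c^m\, t^{|\alpha| + m}}{\Gamma(|\alpha| + m + 1)},
\]
as required. I do not expect any genuine obstacle here: the argument is a standard Dirichlet/Beta-integral computation, and the only points demanding care are verifying the Jacobian and image of the increment substitution and keeping the bookkeeping straight in the telescoping induction.
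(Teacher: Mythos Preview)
Your argument is correct: the increment substitution has Jacobian $1$ and sends $T_m(t)$ onto the open simplex, the Dirichlet identity follows by the Beta-integral induction exactly as you wrote it, and bounding each factor $\Gamma(\alpha_i+1)$ by $c=\max_{[\epsilon,2]}\Gamma$ gives the stated inequality. The paper itself does not supply a proof of this lemma---it merely quotes it from \cite[Lemma~4.5]{hu2015stochastic}---so there is nothing to compare against; your computation is in fact the standard way this estimate is established, and it yields the exact value of $J_m(t,\alpha)$, which is slightly more than the lemma asserts.
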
    

\begin{lemma}[{\citep[Proposition ~3.5.2]{garsia1970topics}}]
\label{Moments inequality}
Let $X_{1}, X_{2},..., X_{n}$ be independent, zero mean, random variables and suppose they belong to $L^{2p}$ for $p$ integer greater than 1.  Set $M=max_{1\leq v\leq n} \E [X_{v}^{2p}]^{\frac{1}{2p}}$. Then for all $(a_{1}, a_{2},..., a_{n}) \in \R^n$ we have 
$$\E \left [|a_{1}X_{1}+\cdots+a_{n}X_{n}|^{2p} \right ] \leq \frac{2^{p}(2p)!}{p!}M^{2p}(a_{1}^{2}+\cdots+a_{n}^{2})^{p}.$$
\end{lemma}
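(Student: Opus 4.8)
The plan is to give a self-contained proof by \emph{symmetrization} followed by Khinchine's inequality, rather than merely quoting \citep{garsia1970topics}; this route has the virtue of landing exactly on the stated constant $\frac{2^{p}(2p)!}{p!}$. Throughout write $W:=\sum_{i=1}^{n}a_{i}^{2}$; if $W=0$ all $a_{i}$ vanish and the bound is trivial, so I assume $W>0$.

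\emph{Step 1 (a sharp Khinchine bound for signs).} I would first establish that for independent Rademacher variables $\epsilon_{1},\dots,\epsilon_{n}$ and arbitrary reals $c_{1},\dots,c_{n}$,
\begin{equation*}
\E\Big[\Big(\sum_{i=1}^{n}c_{i}\epsilon_{i}\Big)^{2p}\Big]\le (2p-1)!!\,\Big(\sum_{i=1}^{n}c_{i}^{2}\Big)^{p},\qquad (2p-1)!!=\tfrac{(2p)!}{2^{p}p!}.
\end{equation*}
Expanding the left side by the multinomial theorem and using $\E[\epsilon_{i}^{m}]=1$ for $m$ even and $0$ for $m$ odd, only the multi-indices with every entry $k_{i}=2j_{i}$ even survive, leaving $\sum_{j_{1}+\dots+j_{n}=p}\frac{(2p)!}{\prod_{i}(2j_{i})!}\prod_{i}c_{i}^{2j_{i}}$. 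Writing $(2p-1)!!(\sum_{i}c_{i}^{2})^{p}=\frac{(2p)!}{2^{p}}\sum_{\sum j_{i}=p}\frac{1}{\prod_{i}j_{i}!}\prod_{i}c_{i}^{2j_{i}}$ and comparing the (nonnegative) coefficients of each monomial $\prod_{i}c_{i}^{2j_{i}}$, the inequality reduces to $\prod_{i}\frac{(2j_{i})!}{2^{j_{i}}j_{i}!}=\prod_{i}(2j_{i}-1)!!\ge 1$, which is immediate since each factor is $\ge 1$.

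\emph{Step 2 (symmetrization and two convexity steps).} Let $X_{i}'$ be an independent copy of $X_{i}$, with the families $(X_{i})_{i}$ and $(X_{i}')_{i}$ independent. Since $\E[X_{i}']=0$, conditioning on $(X_{i})_{i}$ and applying Jensen's inequality to $|\cdot|^{2p}$ gives $\E[|\sum_{i}a_{i}X_{i}|^{2p}]\le\E[|\sum_{i}a_{i}(X_{i}-X_{i}')|^{2p}]$. The variables $D_{i}:=X_{i}-X_{i}'$ are independent, \emph{symmetric}, and lie in $L^{2p}$, so $(D_{i})_{i}\stackrel{d}{=}(\epsilon_{i}D_{i})_{i}$ for independent Rademacher $\epsilon_{i}$ independent of everything; conditioning on $(D_{i})_{i}$ and invoking Step 1 with $c_{i}=a_{i}D_{i}$,
\begin{equation*}
\E\Big[\Big|\sum_{i}a_{i}(X_{i}-X_{i}')\Big|^{2p}\Big]\le (2p-1)!!\,\E\Big[\Big(\sum_{i}a_{i}^{2}D_{i}^{2}\Big)^{p}\Big].
\end{equation*}
Now write $\sum_{i}a_{i}^{2}D_{i}^{2}=W\sum_{i}\frac{a_{i}^{2}}{W}D_{i}^{2}$ and use convexity of $x\mapsto x^{p}$ (valid since $p\ge 1$) with the weights $\frac{a_{i}^{2}}{W}$ summing to $1$ to get $\big(\sum_{i}a_{i}^{2}D_{i}^{2}\big)^{p}\le W^{p-1}\sum_{i}a_{i}^{2}D_{i}^{2p}$; taking expectations and bounding $\E[D_{i}^{2p}]=\E[(X_{i}-X_{i}')^{2p}]\le 2^{2p-1}(\E[X_{i}^{2p}]+\E[X_{i}'^{2p}])\le 2^{2p}M^{2p}$ yields $\E[(\sum_{i}a_{i}^{2}D_{i}^{2})^{p}]\le 2^{2p}M^{2p}W^{p}$. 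Assembling the chain and using $(2p-1)!!\cdot 2^{2p}=\frac{(2p)!}{2^{p}p!}\cdot 2^{2p}=\frac{2^{p}(2p)!}{p!}$ reproduces exactly the asserted inequality.

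\emph{Where the difficulty lies.} This argument is essentially routine once the structure is fixed; the only point demanding real care is the constant bookkeeping, namely getting the \emph{sharp} factor $(2p-1)!!$ in Step 1 (a cruder Khinchine constant would not close the gap to $\frac{2^{p}(2p)!}{p!}$) and tracking the powers of $2$ through the symmetrization. An alternative, more pedestrian route expands $\E[(\sum_{i}a_{i}X_{i})^{2p}]$ directly, uses independence and $\E[X_{i}]=0$ to discard every multi-index having some $k_{i}=1$, and applies Lyapunov's inequality $\E[|X_{i}|^{k_{i}}]\le M^{k_{i}}$; this reduces the whole statement to the purely combinatorial inequality $\sum_{k:\,k_{i}\ne 1,\ \sum_{i}k_{i}=2p}\binom{2p}{k_{1},\dots,k_{n}}\prod_{i}|a_{i}|^{k_{i}}\le \frac{2^{p}(2p)!}{p!}W^{p}$, whose proof must handle odd exponents $k_{i}\ge 3$ by an AM--GM pairing argument — that is the genuinely fiddly step, and is precisely the reason I would present the symmetrization proof above instead.
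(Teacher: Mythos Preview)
Your proof is correct. The paper does not actually supply its own proof of this lemma: it is stated as a citation from \citep[Proposition~3.5.2]{garsia1970topics} and used as a black box in the subsequent corollary. So there is nothing to compare against beyond noting that you have given a clean, self-contained argument where the paper simply quotes the reference.

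For the record, your route --- symmetrize via an independent copy, apply the sharp Khinchine bound $(2p-1)!!$ for Rademacher sums conditionally on the $D_i$, then use convexity of $x\mapsto x^p$ with weights $a_i^2/W$ and the elementary estimate $\E[D_i^{2p}]\le 2^{2p}M^{2p}$ --- lands exactly on the stated constant $\frac{2^p(2p)!}{p!}$, and every step is legitimate. The one place a reader might pause is the coefficient comparison in Step~1, but your reduction to $\prod_i(2j_i-1)!!\ge 1$ is correct (with the usual convention $(-1)!!=1$ for $j_i=0$). The alternative direct-expansion route you sketch at the end is indeed messier because of the odd-exponent terms; your choice of the symmetrization argument is the right one to present.
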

\begin{corollary}
\label{Moments inequality authentic}
    Let Let $X_{1}, X_{2},..., X_{n}$ be independent, zero mean, random variables and suppose they belong to $L^{p}$ for $p$ integer greater than 1.  Set $M=max_{1\leq v\leq n} \E [X_{v}^{p}]^{\frac{1}{p}}$. Then for all $(a_{1}, a_{2},..., a_{n}) \in \R^n$ we have 
$$\E \left [|a_{1}X_{1}+\cdots+a_{n}X_{n}|^{p} \right ]\leq 2^{\frac{5}{2}p+1}(p!)^{\frac{1}{2}}M^{p}(a_{1}^{2}+\cdots+a_{n}^{2})^{\frac{p}{2}}.$$
\begin{proof}
     We note \cref{Moments inequality} has established the even case, thus for even $p$ we can apply \cref{lmag} to $(\frac{p}{2})!$ and obtain 
     \begin{align}
     \label{authentic 1}
         \E \left [|a_{1}X_{1}+\cdots+a_{n}X_{n}|^{p} \right ]&\leq \frac{2^{\frac{p}{2}}(p)!}{(\frac{p}{2})!}M^{p}(a_{1}^{2}+\cdots+a_{n}^{2})^{\frac{p}{2}}\nonumber\\
         &\leq  \frac{2^{\frac{p}{2}}(p)!}{(\frac{1}{2})^{p}(p!)^{\frac{1}{2}}}M^{p}(a_{1}^{2}+\cdots+a_{n}^{2})^{\frac{p}{2}}\nonumber\\
         &\leq  2^{\frac{3p}{2}}((p)!)^{\frac{1}{2}}M^{p}(a_{1}^{2}+\cdots+a_{n}^{2})^{\frac{p}{2}}.
    \end{align}
     To prove the odd case, let $X = |a_{1}X_{1}+\cdots+a_{n}X_{n}|$ and assume $p$ is odd. Utilising Jensen's inequality, we obtain
    \begin{align*}
        \E[X^{p}]\leq \E[X^{p+1}]^{\frac{p}{p+1}}.
    \end{align*}
    Since $p+1$ is even, we can apply \cref{Moments inequality} to $\E[X^{p+1}]^{\frac{p}{p+1}}$ which yields
    \begin{align*}
     \E[X^{p+1}]^{\frac{p}{p+1}}&\leq \left(\frac{(p+1)!2^{\frac{p+1}{2}}}{(\frac{p+1}{2})!}M^{p+1}(a_{1}^{2}+\cdots+a_{n}^{2})^{\frac{p+1}{2}}\right)^{\frac{p}{p+1}}\\
     &\leq \frac{((p+1)!)^{\frac{p}{p+1}}2^{\frac{p}{2}}}{((\frac{p+1}{2})!)^{\frac{p}{p+1}}}M^{p}(a_{1}^{2}+\cdots+a_{n}^{2})^{\frac{p}{2}}.
    \end{align*}
     Applying \cref{lmag}, we can obtain an upper bound for $((p+1)!)^{\frac{p}{p+1}}$ and a lower bound for $(\frac{p+1}{2})!$ as follows:
     \begin{align*}
         ((p+1)!)^{\frac{p}{p+1}}\leq (p!)\left(\frac{p+1}{p}\right)^{p+1},\\
         \left(\frac{p+1}{2}\right)!\geq \left(\frac{1}{2}\right)^{p+1}((p+1)!)^{\frac{1}{2}}.
     \end{align*}
     Combining the aforementioned inequalities we can obtain an upper bound for $\E[X^{p+1}]^{\frac{p}{p+1}}$,
     \begin{align}
     \label{authentic 2}
         \E[X^{p+1}]^{\frac{p}{p+1}}&\leq \frac{2^{\frac{p}{2}}(p!)(\frac{p+1}{p})^{p+1}}{(\frac{1}{2})^{p}((p+1)!)^{\frac{p}{2(p+1)}}}M^{p}(a_{1}^{2}+\cdots+a_{n}^{2})^{\frac{p}{2}}\nonumber\\
         &\leq \frac{2^{p+\frac{p}{2}}2^{p+1}(p!)}{((p+1)!)^{\frac{p}{2(p+1)}}}M^{p}(a_{1}^{2}+\cdots+a_{n}^{2})^{\frac{p}{2}}\nonumber\\
         &\leq \frac{2^{\frac{5p}{2}+1}(p!)}{((p+1)!)^{\frac{p}{2(p+1)}}}M^{p}(a_{1}^{2}+\cdots+a_{n}^{2})^{\frac{p}{2}}\nonumber\\
         &\leq \frac{2^{\frac{5p}{2}+1}(p!)}{(p!)^{\frac{1}{2}}}M^{p}(a_{1}^{2}+\cdots+a_{n}^{2})^{\frac{p}{2}}\nonumber\\
         &\leq 2^{\frac{5p}{2}+1}(p!)^{\frac{1}{2}}M^{p}(a_{1}^{2}+\cdots+a_{n}^{2})^{\frac{p}{2}}.
     \end{align}
     The fourth inequality comes from the fact that $((p+1)!)^{\frac{p}{(p+1)}}\geq p!$ which for convenience we will prove here by induction. When $p=1$, $2^{\frac{1}{2}}\geq 1$ and the base case is verified. Assume $((p+1)!)^{\frac{p}{(p+1)}}\geq p!$ holds and we want to show
    \begin{align*}
        ((p+2)!)^{\frac{p+1}{(p+2)}}\geq (p+1)!.
    \end{align*}
    By assumption,
    \begin{align*}
        ((p+2)!)^{\frac{p+1}{p+2}}= (p+2)^{\frac{p+1}{p+2}}((p+1)!)^{\frac{p+1}{p+2}}&\geq (p+2)^{\frac{p+1}{p+2}}p!.
    \end{align*}
    Utilising \cref{index function inequality} we obtain,
    \begin{align*}
        (p+2)^{\frac{p+1}{p+2}}p!\geq (p+1)p!=(p+1)!,
    \end{align*}
    as required. Combining \cref{authentic 1} and \cref{authentic 2} obtains the wanted result.
\end{proof}
\end{corollary}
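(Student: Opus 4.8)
The plan is to bootstrap everything from the even-exponent case, which is exactly \Cref{Moments inequality} after a relabelling, and then to repackage the resulting combinatorial constants into the stated form using only the Gamma-function estimates already established, above all \Cref{lmag}.

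\emph{Even $p$.} Writing $p=2q$ and applying \Cref{Moments inequality} with parameter $q$ gives
$$\E\!\left[\left|a_{1}X_{1}+\cdots+a_{n}X_{n}\right|^{p}\right]\le\frac{2^{q}(2q)!}{q!}\,M^{p}\left(a_{1}^{2}+\cdots+a_{n}^{2}\right)^{p/2}=\frac{2^{p/2}\,p!}{(p/2)!}\,M^{p}\left(a_{1}^{2}+\cdots+a_{n}^{2}\right)^{p/2},$$
so it remains only to bound $2^{p/2}p!/(p/2)!$. The lower bound in \Cref{lmag} with $k=\tfrac12$ and $n=p$ gives $(p/2)!=\Gamma(\tfrac p2+1)\ge 2^{-p}(p!)^{1/2}$, whence $2^{p/2}p!/(p/2)!\le 2^{3p/2}(p!)^{1/2}\le 2^{\frac52p+1}(p!)^{1/2}$, which is the claim.

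\emph{Odd $p$.} Put $X:=|a_{1}X_{1}+\cdots+a_{n}X_{n}|$. By Jensen's inequality (applied to the convex map $t\mapsto t^{(p+1)/p}$), $\E[X^{p}]\le\E[X^{p+1}]^{p/(p+1)}$; since $p+1$ is even, \Cref{Moments inequality} bounds $\E[X^{p+1}]$ by $\frac{2^{(p+1)/2}(p+1)!}{((p+1)/2)!}M^{p+1}(a_{1}^{2}+\cdots+a_{n}^{2})^{(p+1)/2}$, and raising to the power $p/(p+1)$ produces the correct powers $M^{p}$ and $(\sum a_{v}^{2})^{p/2}$ together with the numerical factor $\bigl(2^{(p+1)/2}(p+1)!/((p+1)/2)!\bigr)^{p/(p+1)}$. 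I would collapse this factor in three moves, each supplied by the lemmas above: (i) $((p+1)!)^{p/(p+1)}\le p!\bigl(\tfrac{p+1}{p}\bigr)^{p+1}$ and $\bigl(\tfrac{p+1}{p}\bigr)^{p+1}\le 2^{p+1}$ for $p\ge1$, the first of which follows from \Cref{lmag} applied with $k=\tfrac{p}{p+1}$; (ii) $((p+1)/2)!=\Gamma(\tfrac{p+1}{2}+1)\ge 2^{-(p+1)}((p+1)!)^{1/2}$, again from \Cref{lmag}; and (iii) the auxiliary inequality $((p+1)!)^{p/(p+1)}\ge p!$, which I would prove by induction on $p$ with the inductive step resting on \Cref{index function inequality} in the form $(p+2)^{(p+1)/(p+2)}\ge p+1$. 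Multiplying out, the powers of $2$ contribute $2^{p/2}$ from $(2^{(p+1)/2})^{p/(p+1)}$, $2^{p}$ from the reciprocal of $2^{-p}$ after raising (ii) to the power $p/(p+1)$, and $2^{p+1}$ from (i), for a total exponent $\tfrac p2+p+(p+1)=\tfrac52p+1$; meanwhile (i) and (iii) leave exactly $p!/(p!)^{1/2}=(p!)^{1/2}$, which is the asserted constant and finishes the odd case.

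\emph{Main obstacle.} Conceptually nothing here is delicate; the only real risk is in the accounting of the powers of $2$ in the odd case, where one must verify that the three batches above really do sum to $\tfrac52p+1$ and no more, and that every invocation of \Cref{lmag} and \Cref{index function inequality} respects their hypotheses (positive integer arguments, exponents in $(0,1)$). Setting up the short induction in (iii) cleanly is the fiddliest point, but it is routine.
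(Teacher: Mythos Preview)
Your proposal is correct and follows essentially the same route as the paper: the even case via \Cref{Moments inequality} together with the lower bound in \Cref{lmag} for $(p/2)!$, and the odd case via Jensen, the even bound on $\E[X^{p+1}]$, the three Gamma-function estimates (i)--(iii), and the inductive inequality $((p+1)!)^{p/(p+1)}\ge p!$ proved with \Cref{index function inequality}. The bookkeeping of the powers of $2$ and the final constant $2^{\frac52 p+1}(p!)^{1/2}$ match exactly.
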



\section{Le Gall's scheme}
\label{appen:LeGall}
\noindent
In this appendix, the scheme of Le Gall \citep{le2006temps, le1994exponential} will be briefly introduced. This scheme is crucial for studying SLT, as it shows that in order to study SLT, it is sufficient to study ILT, albeit under some restrictions. Moreover, these arguments can be adapted to the derivative case, namely one can show that to study DSLT, it is sufficient to consider DILT. For the purposes of illustration and for simplicity, we will consider the Le Gall scheme in the context of Brownian motion; however it also applies in the case of symmetric $\alpha$-stable processes. The following figure will illustrate the idea.
\begin{center}
\begin{tikzpicture}
\draw[thick,->] (0,0) -- (4.5,0) node[anchor=north west] {$s$};
\draw[thick,->] (0,0) -- (0,4.5) node[anchor=south east] {$t$};
\draw (0,0)--(0,4)--(4,4)--(0,0);
\draw  (0,2)--(2,2)--(2,4);
\draw (0,1)--(1,1)--(1,2);
\draw (2,3)--(3,3)--(3,4);
\draw (0,0.5)--(0.5,0.5)--(0.5,1);
\draw (1,1.5)--(1.5,1.5)--(1.5,2);
\draw (2,2.5)--(2.5,2.5)--(2.5,3);
\draw (3,3.5)--(3.5,3.5)--(3.5,4);
\draw (1,3) node{$A_{1}^{1}$};
\draw (0.5,1.5) node{$A_{1}^{2}$};
\draw (2.5,3.5) node{$A_{2}^{2}$};
\draw (0.25,0.75) node{$A_{1}^{3}$};
\draw (1.25,1.75) node{$A_{2}^{3}$};
\draw (2.25,2.75) node{$A_{3}^{3}$};
\draw (3.25,3.75) node{$A_{4}^{3}$};
\draw (-0.25,4) node{1};
\draw (4,-0.25) node{1};
\end{tikzpicture}
\end{center}
To be precise, for $n \in \N$ and $k \in \{1,2,\dots, 2^{n-1}\}$ the squares are given by
\begin{align*}
      A_{k}^{n}&= \big[(2(k-1)2^{-n},(2k-1)2^{-n} \big )\times \big ((2k-1)2^{-n},2k2^{-n})\big ].
\end{align*}
In addition, we write 
\begin{align}
    L & :=\int_{0}^{1}\int_{0}^{1}\delta^{'}\left(B_{t}^{1}-B_s^{2}\right)dsdt \label{eqn:DILT} \\
            & = \frac{i}{2\pi}\int_0^1 \int_0^1 \int_{\R}pe^{ip(B^1_{t}-B^2_{s})}dpdsdt, \nonumber \\
     \hat{L}_{n,k} &:= \int_{A_{k}^{n}}\delta^{'}(B_t-B_s)dsdt, \label{eqn:DILTAkn} \\
	&=\frac{i}{2\pi}\int_{(2k-1)2^{-n}}^{2k2^{-n}}\int_{2(k-1)2^{-n}}^{(2k-1)2^{-n}}\int_{\R}pe^{ip(B_{t}-B_{s})}dpdsdt.  \nonumber 
\end{align}
The object \cref{eqn:DILT} is DILT whereas the object \cref{eqn:DILTAkn} is more or less DILT as the integration is done over $A_k^n$ (see the proof of \Cref{prop:LeGall} for more clarity). As they are written, the preceding objects seem like formal expressions. However, they can be well-defined by modifying the arguments utilised to define ILT in \citep{geman1984local}.

The original purpose of the Le Gall scheme was to utilise ILT in a clever way to define SLT. Here we will show that the scheme can be adapted to the derivative case. That is, we may use the previous objects given by \cref{eqn:DILT} and \cref{eqn:DILTAkn} in order to define the following:
\begin{align*}
     \hat{L} &= \int_0^1 \int_0^t \delta'(B_t - B_s) dsdt \\
                &= \frac{i}{2\pi} \int_{0}^{1}\int_{0}^{t}\int_{\R} pe^{ip(B_{t}-B_{s})}dpdsdt
\end{align*}
which we note is a formal expression for DSLT. This was originally considered by Rosen in \citep{rosen2005derivatives}, following upon related work by Rogers and Walsh \citep{rogers1991intrinsic, rogers1991local, rogers1991t}. The following proposition summarises the adaptation of the Le Gall scheme to the derivative case.
\begin{prop}
\label{prop:LeGall}
With the notation as above, $\hat{\alpha}$ and $\hat{\alpha}_{n,k}$ possess the following properties.
\begin{enumerate}[label = (\arabic*), ref = \arabic*]
    \item For each fixed $n \in \N$, $\hat{L}_{n,k}$ are mutually independent for $k\in \{1, 2, \dots, 2^{n-1}\}.$ \label{prop:1}
    \item $\hat{L}_{n,k} \overset{d}{=}2^{-n}L.$ \label{prop:2}
    \item $\hat{L}:=\sum_{n=1}^{\infty}\sum_{k=1}^{2^{n-1}}\hat{L}_{n,k}$, with convergence holding in all $L^p$ spaces and a.s. \label{prop:3}
\end{enumerate}
\end{prop}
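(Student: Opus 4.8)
The plan is to combine the geometry of the dyadic decomposition pictured above with the Gaussian regularisations $\rho_{\ep}$, reducing everything to two ingredients that are already available: the $L^p$-existence of the DILT $L$ (Rosen \citep{rosen2005derivatives}), which in particular gives $\E[|L|^p]<\infty$ for every $p$, and the Garsia-type moment inequality for sums of independent, centred random variables, \Cref{Moments inequality authentic}. I will argue for Brownian motion; the symmetric $\alpha$-stable case is identical with $1/2$ replaced by $1/\alpha$ throughout.

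Properties (1) and (2) are quick. For (1), if $(s,t)\in A_k^n$ then $B_t-B_s=(B_t-B_{(2k-1)2^{-n}})+(B_{(2k-1)2^{-n}}-B_s)$, so $\hat L_{n,k}$ is a measurable functional of the increments of $B$ over $I_k^n:=[2(k-1)2^{-n},2k2^{-n})$; the intervals $I_1^n,\dots,I_{2^{n-1}}^n$ partition $[0,1)$, and the independence of the increments of $B$ yields mutual independence. For (2), in the Fourier representation of $\hat L_{n,k}$ I would substitute $t=(2k-1)2^{-n}+2^{-n}\tilde t$, $s=(2k-1)2^{-n}-2^{-n}\tilde s$, then $p\mapsto 2^{n/2}p$, and apply Brownian scaling to the forward and backward halves of $B$ about the midpoint $(2k-1)2^{-n}$ (which are independent); one lands exactly on $2^{-n}$ times the DILT $L$ of two independent Brownian motions. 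This is the $\alpha=2$ case of \cref{stable scaling}.

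The content is (3). First note that $T:=\{0\le s\le t\le 1\}$ is, up to a Lebesgue-null set, the disjoint union $\bigsqcup_{n\ge 1}\bigsqcup_{k=1}^{2^{n-1}}A_k^n$, and that deleting the squares of levels $\le N$ leaves $2^N$ triangles $T_j^N$ ($j=1,\dots,2^N$), each a copy of $T$ scaled by $2^{-N}$ and sitting over the time interval $[(j-1)2^{-N},j2^{-N})$. For $\ep>0$ let $\hat L^{\ep}$, $\hat L^{\ep}_{n,k}$ denote the integrals with $\rho_{\ep}'$ in place of $\delta'$; since $\rho_{\ep}'$ is bounded and $\sum_{n,k}|A_k^n|=|T|<\infty$, the splitting $\hat L^{\ep}=\sum_{n,k}\hat L^{\ep}_{n,k}$ holds pathwise with an absolutely convergent sum. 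I would then show two things: that $\sum_{n,k}\hat L_{n,k}$ converges in every $L^p$ and a.s., and that it equals the DSLT $\hat L=\lim_{\ep\downarrow 0}\hat L^{\ep}$. For the first, put $Y_n:=\sum_{k=1}^{2^{n-1}}\hat L_{n,k}$; by (1) these summands are independent, by (2) and the symmetry $B\stackrel{d}{=}-B$ (under which $\delta'$ is odd) each is centred with $\hat L_{n,k}\stackrel{d}{=}2^{-n}L$, and $\E[|L|^p]<\infty$ for all $p$. \Cref{Moments inequality authentic} with unit coefficients then gives $\|Y_n\|_p\le C_p\,\E[|L|^p]^{1/p}\,2^{-n/2}$, which is summable in $n$, so $\sum_n Y_n$ converges in $L^p$; since $\sum_n\E|Y_n|\le\sum_n\|Y_n\|_2<\infty$, the series also converges absolutely a.s.

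For the identification, the crucial point is a bound on the remainder $R_N^{\ep}:=\hat L^{\ep}-\sum_{n\le N}\sum_k\hat L^{\ep}_{n,k}=\sum_{j=1}^{2^N}\int_{T_j^N}\rho_{\ep}'(B_t-B_s)\,ds\,dt$ that is uniform in $\ep$. The $2^N$ integrals are independent (disjoint time intervals), each is centred, and by scaling each is distributed as $2^{-N}\hat L^{\,2^N\ep}$; since $\hat L^{\delta}\to\hat L$ in $L^2$ and $\|\hat L^{\delta}\|_2$ is bounded on $(0,\infty)$, we may set $C_*:=\sup_{\delta>0}\|\hat L^{\delta}\|_2<\infty$, whence $\|R_N^{\ep}\|_2^2=2^{-N}\Var(\hat L^{2^N\ep})\le C_*^2\,2^{-N}$ for every $\ep$. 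Given $\eta>0$, I would first pick $N$ so that $\|R_N^{\ep}\|_2<\eta$ for all $\ep$ and $\|\sum_{n>N}Y_n\|_2<\eta$, then pick $\ep$ so small that $\|\hat L^{\ep}-\hat L\|_2<\eta$ and $\sum_{n\le N}\sum_k\|\hat L^{\ep}_{n,k}-\hat L_{n,k}\|_2<\eta$ (a finite sum); the triangle inequality then forces $\|\hat L-\sum_{n,k}\hat L_{n,k}\|_2<4\eta$, so the two coincide, and since the partial sums converge both a.s. and in $L^p$, the common limit is $\hat L$. I expect this interchange of the limits $N\to\infty$ and $\ep\downarrow 0$ to be the main obstacle: each individual estimate is benign, but the uniform control of the near-diagonal remainder $R_N^{\ep}$ genuinely requires combining the independence across the $T_j^N$ with the scaling, and it is the one place where Rosen's results cannot simply be quoted off the shelf.
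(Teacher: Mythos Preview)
Your argument is correct, and for parts \cref{prop:1} and \cref{prop:2} it matches the paper's proof essentially line for line (the paper uses the change of variables $t=2^{-n}u$, $s=2^{-n}v$ and then splits at the midpoint, which is your substitution in different coordinates). For part \cref{prop:3} the paper does not give a self-contained argument at all: it simply cites Rosen \citep{rosen2005derivatives} and remarks that the key idea is to exploit independence via \Cref{Moments inequality}. You follow exactly that hint for the $L^p$ and a.s.\ convergence of $\sum_n Y_n$, and then go further by supplying the identification of the limit with $\hat L$ through the $\ep$-regularisation and the uniform-in-$\ep$ bound $\|R_N^{\ep}\|_2^2\le C_*^2 2^{-N}$ on the near-diagonal remainder; this last step is precisely the content the paper outsources to the reference, and your treatment of it is sound.
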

\begin{proof}
In order to prove \cref{prop:1}, it is enough to recognise that each rectangle $A_{k}^{n}$ does not overlap and that Brownian motion has independent increments. The mutual independence of $\hat{L}_{n,k}$ in $k\in \{1, 2, \dots, 2^{n-1}\}$ for each fixed $n \in \N$ then follows.

In order to show \cref{prop:2}, we will change variables, let $2^{-n}u=t,2^{-n}v=s$. Then
\begin{align*}
	\hat{L}_{n,k}&=2^{-2n}\frac{i}{2\pi}\int_{2k-1}^{2k}\int_{2k-2}^{2k-1}\int_{\R}pe^{2^{-\frac{n}{2}}ip(B_{u}-B_{v})}dpdvdu \\
	&=2^{-2n}\frac{i}{2\pi}\int_{2k-1}^{2k}\int_{2k-2}^{2k-1}\int_{\R}pe^{2^{-\frac{n}{2}}ip(B_{u}-B_{2k-1}+B_{2k-1}-B_{v})}dpdvdu \\
	&\overset{d}{=}2^{-2n}\frac{i}{2\pi}\int_{2k-1}^{2k}\int_{2k-2}^{2k-1}\int_{\R}pe^{2^{-\frac{n}{2}}ip(B_{u-2k+1}^{1}-B_{2k-1-v}^{2})}dpdvdu.
\end{align*}
Here $B^{1}$ and $B^{2}$ refer to two independent Brownian motions, and since $B_{u}-B_{2k-1}$ and $B_{2k-1}-B_{v}$ are independent the last equality in distribution is legitimate. We now change variable again, letting $w=u-2k+1, r=2k-1-v$, hence
\begin{align*}
    	\hat{L}_{n,k}&\overset{d}{=}-2^{-2n}\frac{i}{2\pi}\int_{0}^{1}\int_{1}^{0}\int_{\R}pe^{2^{-\frac{n}{2}}ip(B_{r}^{1}-B_{w}^{2})}dpdrdw \\
   	 &\overset{d}{=}2^{-2n}\frac{i}{2\pi}\int_{0}^{1}\int_{0}^{1}\int_{\R}pe^{2^{-\frac{n}{2}}ip(B_{r}^{1}-B_{w}^{2})}dpdrdw.
\end{align*}
Changing variable for $p$ by letting $p=2^{\frac{n}{2}}\eta$ yields
\begin{align*}
    	\hat{L}_{n,k}&\overset{d}{=}2^{-n}\frac{i}{2\pi}\int_{0}^{1}\int_{0}^{1}\int_{\R}\eta e^{i\eta(B_{r}^{1}-B_{w}^{2})}d\eta drdw \\
    	&\overset{d}{=}2^{-n}\int_{0}^{1}\int_{0}^{1}\delta^{'}\left(B_{r}^{1}-B_w^{2}\right)drdw\\
     &\overset{d}{=}2^{-n}L.
\end{align*}
Consequently \cref{prop:2} is shown.

\cref{prop:3} requires more care than the other two parts, and is shown in detail in \citep{rosen2005derivatives}. The main idea is to exploit independence by using \Cref{Moments inequality} to provide strong estimates for the terms in the sum in question.
\end{proof}


\bibliographystyle{plainurl}
\bibliography{citation_2}

\end{document}